\newtheorem{lemma}{Lemma}
\newtheorem{theorem}{Theorem}
\newtheorem{definition}{Definition}
\newtheorem{remark}{Remark}
\newtheorem{proposition}{Proposition}
\newcommand{\ds}{\displaystyle}
\newcommand{\dss}{\displaystyle\sum}
\newcommand{\Var}{{\textrm{Var}}}
\newcommand{\E}{{\rm E}}
\newcommand{\lp}{\left(}
\newcommand{\rp}{\right)}
\newcommand{\cH}{\mathcal{H}}
\DeclarePairedDelimiter{\abs}{\lvert}{\rvert}%
\DeclarePairedDelimiter\sqbrac{\lbrack}{\rbrack}
\newcommand{\paren}[1]{\lp#1\rp}%
\newcommand{\Prob}[1]{\Pr\lp#1\rp}
\title{Concentration inequalities in spaces of random configurations with positive Ricci curvatures}
\author{
Linyuan Lu
\thanks{University of South Carolina, Columbia, SC 29208,
({\tt lu@math.sc.edu}). This author was supported in part by NSF
grant DMS-1600811.} \and
Zhiyu Wang \thanks{University of South Carolina, Columbia, SC 29208,
({\tt zhiyuw@math.sc.edu}). This author was supported in part by NSF
grant DMS-1600811.} 
}
\begin{document}
\maketitle
\begin{abstract}
In this paper, we prove an Azuma-Hoeffding-type inequality in several classical models of random configurations, including the Erd\H{o}s-R\'enyi random graph models $G(n,p)$ and $G(n,M)$, the random $d$-out(in)-regular directed graphs, and the space of random permutations.
The main idea is using Ollivier's work on the Ricci curvature of Markov chairs on metric spaces.
 Here we give a cleaner form of such concentration inequality in graphs. Namely, we show that for any Lipschitz function $f$ on any graph (equipped with an ergodic random walk and thus an invariant distribution $\nu$)
 with Ricci curvature at least $\kappa>0$, we have
 \[\nu \left( |f-E_{\nu}f| \geq t \right) \leq 2\exp\left( -\frac{t^2\kappa}{7}  \right).\]
\end{abstract}

\section{Introduction}


One of the main tools in probabilistic analysis and random graph theory is the concentration inequalities, which are meant to bound the probability that a random variable deviates from its expectation. Many of the classical concentration inequalities (such as those for binomial distributions) provide best possible deviation results with exponentially small probabilistic bounds. 
Such concentration inequalities usually require certain independence assumptions (e.g., the random variable is a sum of independent random variables). For concentration inequalities without the independence assumptions, one popular approach is the martingale method. A martingale is a sequence of random variables $X_0, X_1, \ldots, X_n$ with finite means such that $E[X_{i+1} \vert X_i, X_{i-1}, \ldots, X_0] = X_i$ for all $0\leq i < n$. For $\textbf{c} = (c_1, c_2, \ldots, c_n)$ with positive entries, a martingale $X$ is said to be $\textbf{c}$-Lipschitz if $|X_i - X_{i-1}| \leq c_i$ for $i\in [n]$. A powerful tool for controlling martingales is the Azuma-Hoeffding inequality \cite{Azuma, Hoeffding}: if a martingale is $\textbf{c}$-Lipschitz, then 
$$\Prob{  |X-E[X]| \geq t} \leq 2\exp{ \lp -\frac{t^2}{2\sum_{i=1}^n c_i^2} \rp}.$$
For more general versions of martingale inequalities as well as applications of martingale inequalities, we refer the readers to \cite{AlonSpencer, Chung-Lu}.

A graph $G = (V,E)$ is a pair of the vertex set $V$ and the edge set $E$ where each edge is an unordered pair of two vertices. Given a vertex $v\in V$, we use $\Gamma(v)$ to denote the set of \textit{open neighbors} of $v$ in $G$, i.e., $\Gamma(v) = \{u \in V: vu \in E\}$. Moreover, let $N(v) = \Gamma(v) \cup \{v\}$ be the \textit{closed neighbors} of $v$.
A graph parameter/function $X$ is called {\em vertex-Lipschitz} if $|X(G_1)-X(G_2)|\leq 1$ whenever $G_1$ and $G_2$ can be made isomorphic by deleting one vertex from each. A graph parameter $X$ is called \textit{edge-Lipschitz} if $|X(G_1) - X(G_2)| \leq 1$ whenver $G_1$ and $G_2$ differs by an edge. Many graph parameters are vertex(edge)-Lipschitz, e.g., the independence number $\alpha(G)$,
the chromatic number $\chi(G)$, the clique number $\omega(G)$, the domination number $\gamma(G)$, the matching number $\beta(G)$, etc. 

Concentration inequalities are among the most important tools in the probabilistic analysis of random graphs. The classical  binomial random graph model, denoted by $G(n,p)$, is a random graph model in which a graph with $n$ vertices is constructed by connecting the vertices randomly such that each vertex pair appears as an edge with probability $p$ independently from every other edge. The Erd\H{o}s-R\'enyi random graph model $G(n,M)$ is the model, in which a graph is chosen uniformly at random from the collection of all graphs with $n$ vertices and $m$ edges. A standard application of the Azuma-Hoeffding inequality gives us that for any vertex-Lipschitz function $X$ defined on a vertex-exposure martingale (see e.g. \cite{AlonSpencer} for definition), we have
\begin{equation}\label{eq:martinale}
\Pr(|X-\E(X)|\geq t)\leq 2\exp \paren{-\frac{t^2}{2n}}.
\end{equation}
Similar concentration results can be obtained for edge-exposure martingale as well.

In this paper, we will take an alternative approach for such an inequality. The main idea is using Ollivier's work \cite{Ollivier} on the Ricci curvature of Markov chairs on metric spaces. Although the Ricci curvature of graphs has been introduced  since 2009,
it has not been widely used by the communities of combinatorists and graph theorists. In this paper, we prove a clean concentration result (Theorem \ref{thm:main_tool}) on graphs with positive Ricci curvature. Then we show that it can be applied to some classical models of 
 random configurations including the Erd\H{o}s-R\'enyi random graph model $G(n,p)$ and $G(n,M)$, the random $d$-out(in)-regular directed graphs, and the space of random permutations, through a geometrization process.
 
 Consider a graph (loops allowed) $G=(V,E)$ equipped with a random work $m:=\{m_v\colon v\in V\}$.
 Here for each vertex $v$, $m_v\colon N(v)\to [0,1]$ is a distribution, i.e., $\sum_{x\in N(v)}m_v(x)=1$.
 Assume that this random walk is {\em ergodic} so that an invariant distribution $\nu$ exists. In the context of random walks on graphs, in order for the random walk to be ergodic, it is sufficient that the underlying graph $G$ is connected and non-bipartite. Note that $\nu$ is a probability measure on $V$. It turns $V$ into a probability space. A function $f\colon V\to \mathbb{R}$ is called $c$-Lipschitz on $G$ if 
 \begin{equation} \label{lipischtz}
  |f(u)-f(v)|\leq c \quad \mbox{ for any } uv\in E(G).     
 \end{equation}
 We have the following theorem on the concentration result of $f$. All we need is that the graph $G$ (equipped with a random walk) has positive Ricci curvature at least $\kappa>0$. (See the definition of Ricci curvature (in Ollivier's notion) in next section.) 
 
 \begin{theorem}\label{thm:main_tool}
Suppose that a graph $G=(V,E)$ equipped with an ergodic random walk $m$ (and invariant distribution $\nu$) has a positive Ricci curvature at least $\kappa>0$.
Then for any $1$-Lipschitz function $f$ and any $t\geq 1$,  we have 
\begin{align}\label{thm:conc0}
    \nu\lp f-E_{\nu}[f] > t\rp &\leq \exp{\lp\frac{-t^2\kappa}{7}\rp},\\
     \nu\lp f-E_{\nu}[f] < -t\rp &\leq \exp{\lp\frac{-t^2\kappa}{7}\rp}.
\end{align}
\end{theorem}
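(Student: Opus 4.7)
The plan is to use Ollivier's main consequence of positive Ricci curvature --- Wasserstein contraction --- to construct a reverse martingale from smoothed versions of $f$, and then apply a Chernoff bound. First I would upgrade the edge-wise hypothesis $\kappa(u,v)\geq \kappa$ to the global inequality $W_1(m_x,m_y)\leq (1-\kappa)d(x,y)$ for all $x,y\in V$ by chaining along a shortest $x$--$y$ path (using the triangle inequality for $W_1$). Iterating the Markov operator then gives $W_1(m_x^{*n},m_y^{*n})\leq (1-\kappa)^n d(x,y)$ for every $n\ge 0$. Setting $f_n(x):=\sum_y m_x^{*n}(y)\,f(y)$ and invoking Kantorovich--Rubinstein duality, $f_n$ is thus $(1-\kappa)^n$-Lipschitz with respect to $d$; in particular $f_n\to E_\nu f$ uniformly as $n\to\infty$.

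Next I would set up the reverse martingale. Run a stationary copy of the walk $X_0,X_1,\ldots$ with $X_0\sim\nu$, so that $X_n\sim\nu$ for every $n$. Fix a large horizon $N$ and set $M_n:=f_{N-n}(X_n)$. The Markov property gives $E[M_{n+1}\mid X_0,\ldots,X_n]=M_n$, while $M_0=f_N(X_0)$ and $M_N=f(X_N)$. Conditional on $X_n=x$, the increment
\[M_{n+1}-M_n=f_{N-n-1}(X_{n+1})-E_{m_x}[f_{N-n-1}]\]
lies in an interval of length at most $2(1-\kappa)^{N-n-1}$, since $f_{N-n-1}$ is $(1-\kappa)^{N-n-1}$-Lipschitz and the support $N(x)$ has graph diameter at most $2$. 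Hoeffding's lemma then bounds its conditional moment generating function by $\exp\bigl(\tfrac{\lambda^2}{2}(1-\kappa)^{2(N-n-1)}\bigr)$.

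Telescoping via the tower property and summing the resulting geometric series yields
\[\log E\bigl[e^{\lambda(M_N-M_0)}\bigr]\leq \frac{\lambda^2}{2}\sum_{k=0}^{N-1}(1-\kappa)^{2k}\leq \frac{\lambda^2}{2\kappa-\kappa^2}.\]
Sending $N\to\infty$, the boundary term $f_N(X_0)-E_\nu f$ vanishes uniformly (because $f_N$ is $(1-\kappa)^N$-Lipschitz with mean $E_\nu f$), so $f-E_\nu f$ under $\nu$ is sub-Gaussian with variance proxy of order $1/\kappa$. Chernoff optimization in $\lambda>0$ then produces an upper-tail bound of the form $\exp(-c\,t^2\kappa)$; the lower-tail bound follows by applying the same argument to $-f$.

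The main obstacle is not conceptual but numerical: extracting the specific clean constant $\tfrac17$. The naive computation above gives a strictly sharper bound --- roughly $\exp\bigl(-t^2\kappa(2-\kappa)/2\bigr)$ --- so the theorem must be trading some tightness for simplicity. In particular, the hypothesis $t\ge 1$ in the statement likely serves to absorb the lower-order factor $2-\kappa$, the diameter-$2$ estimate on closed neighborhoods, and similar corrections into a single universal constant in the exponent.
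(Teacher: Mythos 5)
Your proposal is correct, and its overall architecture coincides with the paper's: both arguments iterate the averaging operator $M$, use the Lipschitz contraction from positive curvature (Proposition \ref{prop:Lip-contraction}) so that $M^j f$ is $(1-\kappa)^j$-Lipschitz, sum a geometric series to bound the log-moment-generating function of $f$ under $\nu$, and finish with a Chernoff bound; your stationary martingale $M_n=f_{N-n}(X_n)$ (a forward Doob martingale, not really a reverse one) is exactly the probabilistic repackaging of the paper's pointwise iteration of $M^i e^{\lambda f}$. The genuine difference is the one-step estimate, and it is substantive. The paper's Lemma \ref{lem:average-operator} bounds $(Me^{\lambda\phi})(x)$ via the second-order Taylor inequality $Eg(Y)\le g(EY)+\tfrac12(\sup g'')\Var Y$ and then controls $\sup_{\textrm{Supp}\,m_x}e^{\lambda\phi}$ crudely; this injects the factor $e^{2\lambda\alpha}$, which forces $\lambda$ to stay below the root $\lambda_0$ of $xe^{2x}=2(2-\kappa)$, necessitates the preliminary reduction to $t\le 2/\kappa$, and is the sole source of the constant $7$. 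You instead apply Hoeffding's lemma to the conditional increment, which is supported on an interval of length $2(1-\kappa)^{N-n-1}$ because $\textrm{diam}(\textrm{Supp}\,m_x)\le 2$; this gives $\exp(\lambda^2\alpha^2/2)$ with no exponential correction, the Chernoff optimization is then unconstrained, and you obtain $\exp\lp-t^2\kappa(2-\kappa)/2\rp$. Since $\kappa\le 2/\textrm{diam}(G)\le 1$ for any non-complete graph (Lemma \ref{lem:kappa_diam}), while on a complete graph $|f-E_\nu f|\le 1\le t$ makes the tail trivially zero, your bound implies the theorem with $7$ replaced by $2$ and recovers the $(1+o(1))t^2\kappa$ exponent of the paper's remark without the extra hypothesis $t\kappa\to 0$. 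The only loose ends are cosmetic: you should state the diameter-one case explicitly (it is the only way $\kappa>1$ can occur, and there your exponent $\kappa(2-\kappa)/2$ degenerates), and note that the uniform convergence $f_N\to E_\nu f$ uses the finiteness of $\textrm{diam}(G)$, which itself follows from $\kappa>0$. Otherwise your argument is a strict quantitative improvement on the paper's.
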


\begin{remark}
The constant $7$ can be improved to $5$ if 
$\kappa \to 0$ as $|V(G)| \to \infty$.
It can be improved to $1+o(1)$ if we further assume
$t\kappa \to 0$ as $|V(G)| \to \infty$.
\end{remark}

\begin{remark}
Ollivier \cite{Ollivier} proved a concentration inequality for any random walk on a metric space with positive Ricci curvature at least $\kappa>0$ and unique invariant distribution $\nu$. His result is more general but more technical to apply in the context of graphs. In particular, he defined two quantities related to the local behavior of the random walk: the {\em diffusion constant} $\sigma(x)$ and the {\em local dimension} $n_x$ at vertex $x$. Moreover, define $D^2_x = \frac{\sigma(x)^2}{n_x \kappa}$, $D^2 = E_{\nu}[D_x^2]$, 
$t_{max}=\frac{D^2}{\max(\sigma_\infty, 2C/3)}$ where $C$ satisfies that the function $x \to D_x^2$ is $C$-Lipschitz.
He proved (\cite{Ollivier} Theorem 33, on page 834) for any $1$-Lipschitz function $f$ and
for any $t\leq t_{max}$, we have
  \begin{equation}\label{conc1_Ollivier}
    \nu\lp f-E_{\nu}[f] > t\rp\leq \exp{\lp\frac{-t^2}{6D^2}\rp}.
\end{equation}
and for $t\geq t_{max}$,
  \begin{equation}\label{conc2_Ollivier}
    \nu\lp f-E_{\nu}[f] > t\rp\leq \exp{\lp\frac{-t^2}{6D^2}
    -\frac{t-t_{max}}{\max(3\sigma_\infty, 2C)}\rp}.
\end{equation}
\end{remark}

\begin{remark}
Note in Ollivier's result for graphs, we have $D^2=O(\kappa^{-1})$ and $\sigma_\infty\approx 1$. Inequality \eqref{thm:conc0} has about the same power as Inequalities \eqref{conc1_Ollivier} and \eqref{conc2_Ollivier},
but cleaner; thus is easier to apply in the context of graphs.
\end{remark}

Besides Ollivier's definition of Ricci curvature, another notion of Ricci curvature on discrete spaces, via geodesic convexity of the entropy (in the spirit of Sturm \cite{Sturm}, Lott and Villani \cite{LV}), was proposed in \cite{Mass} and systematically studied in \cite{EM} and \cite{Mielke}. Similar Gaussian-type concentration inequalities (as ones in Theorem \ref{thm:main_tool}) in this notion of Ricci curvature was proven in \cite{EM}. 
Erbar, Maas, and Tetali \cite{EMT} recently calculated the Ricci curvature lower bound of some classical random walks, e.g., the Bernoulli-Laplace model and the random  transposition  model of permutations. 

In this paper, we adopt Ollivier's notion of coarse Ricci curvature as it does not require the reversibility of the random walk on graphs.
The paper is organized as follows. In Section \ref{sec:ricci}, we will give the history and definitions of Ricci curvature. The proof of Theorem \ref{thm:main_tool} will be given in Section \ref{sec:main_proof}.  In last section, we will give applications of Theorem \ref{thm:main_tool} in four classical models of random configurations, including the Erd\H{o}s-R\'enyi random graph model $G(n,p)$ and $G(n,M)$, the random $d$-out(in)-regular directed graphs, and the space of random permutations.

\section{Ricci Curvatures of graphs}\label{sec:ricci}
In Riemannian geometry, spaces with positive Ricci curvature enjoy very nice properties, some of them with probabilistic interpretations. Many interesting properties are found on manifolds with non-negative Ricci curvature or on manifolds with Ricci curvature bounded below. 
The definition of the Ricci curvature on metric spaces first came from the Bakry and Emery notation \cite{Bakry-Emery} who defined the ``lower Ricci curvature bound" through the heat semigroup $(P_t)_{t\geq 0}$ on a metric measure space. Ollivier \cite{Ollivier} defined the coarse Ricci curvature of metric spaces in terms of how much small balls are closer (in Wasserstein transportation distance) then their centers are. This notion of coarse Ricci curvature on discrete spaces was also made explicit in the Ph.D. thesis of Sammer \cite{Sammer}.
Under the assumption of positive curvature in a metric space, Gaussian-like or Poisson-like concentration inequalities can be obtained. Such concentration inequalities have been investigated in \cite{Joulin} for time-continuous Markov jump processes and in \cite{Ollivier, JO} in metric spaces.

Graphs and manifolds share some similar properties through Laplace operators, heat kernels and random walks, etc. A series of work in this area were done by Chung, Yau and their coauthors \cite{Chung-G-Yau, Chung-Yau94, Chung-Yau95, Chung-Yau96, Chung-Yau97A, Chung-Yau97B, Chung-Yau99, Chung-Yau99B, Chung-Yau00, Chung-Yau00B, Chung-Yau00C}.
The first definition of Ricci curvature on graphs was introduced by Chung and Yau in \cite{Chung-Yau95}. For a more general definition of Ricci curvature, Lin and Yau \cite{LY} gave a generalization of lower Ricci curvature bound in the framework of graphs.  Lin, Lu, and Yau \cite{LLY} defined a new kind of Ricci curvature on graphs, which is based on Ollivier's work \cite{Ollivier}.

In this paper, we will use the same notation as in \cite{LLY}.
A probability distribution (over the vertex set $V(G)$) is a mapping $m: V\to [0,1]$ satisfying $\sum_{x\in V} m(x) = 1$. Suppose two probability distributions $m_1$ and $m_2$ have finite support. A coupling between $m_1$ and $m_2$ is a mapping $A: V\times V \to [0,1]$ with finite support so that 
$$\dss_{y \in V} A(x,y) = m_1(x) \textrm{ and } \dss_{x\in V} A(x,y) = m_2(y).$$
Let $d(x,y)$ be the graph distance between two vertices $x$ and $y$. The \textit{transportation distance} between two probability distributions $m_1$ and $m_2$ is defined as follows:
$$W(m_1, m_2) = \inf_A \dss_{x,y\in V} A(x,y) d(x,y).$$
where the infimum is taken over all coupling $A$ between $m_1$ and $m_2$. By the duality theorem of a linear optimization problem, the transportation distance can also be written as follows:
$$W(m_1, m_2) = \sup_f \dss_{x\in V} f(x) \lp m_1(x)-m_2(x)\rp$$
where the supremum is taken over all $1$-Lipschitz functions $f$.

A random walk $m$ on $G=(V,E)$ is defined as a family of probability measures $\{m_v(\cdot)\}_{v\in V}$ such that $m_v(u) = 0$ for all $\{v,u\} \notin E$. It follows that  $m_v(u) \geq 0$ for all $v,u\in V$ and $\sum_{u\in N(v)} m_v(u) = 1$. The Ricci cuvature $\kappa$ of $G$ can then be defined as follows:

\begin{definition}
Given $G=(V,E)$, a random walk $m = \{m_v(\cdot)\}_{v\in V}$ on $G$ and two vertices $x,y\in V$, 
$$\kappa(x,y) = 1 - \frac{W(m_x, m_y)}{d(x,y)}.$$
\end{definition}
\begin{remark}
We say a graph $G$ equipped with a random walk $m$ has Ricci curvature at least $\kappa_0$ if $\kappa(x,y) \geq \kappa_0$ for all $x,y \in V$.
\end{remark}
For $0\leq \alpha < 1$, the {\em $\alpha$-lazy random walk} $m_x^{\alpha}$ (for any vertex $x$), is defined as 
\[
m_x^{\alpha}(v) = \begin{cases} 
                        \alpha & \textrm{ if $v=x$,}\\
                        (1-\alpha)/d_x &\textrm{ if $v\in \Gamma(x)$,}\\
                        0 & \textrm{ otherwise.}
                    \end{cases}
\]
In \cite{LLY}, Lin, Lu and Yao defined the Ricci curvature of graphs based on the $\alpha$-lazy random walk as $\alpha$ goes to $1$. More precisely,
for any $x,y \in V$, they defined the $\alpha$-Ricci-curvature $\kappa_{\alpha}(x,y)$ to be 
$$\kappa_{\alpha}(x,y) = 1 - \frac{W(m_x^{\alpha}, m_y^{\alpha})}{d(x,y)}$$ and the Ricci curvaure $\kappa_{\textrm{LLY}}$ of $G$ to be 
\[\kappa_{\textrm{LLY}}(x,y) = \ds\lim_{\alpha \to 1} \frac{\kappa_{\alpha}(x,y)}{(1-\alpha)}.\]
They showed \cite{LLY} that $\kappa_{\alpha}$ is concave in $\alpha \in [0,1]$ for any two vertices $x,y$. Moreover, 
\[\kappa_{\alpha}(x,y) \leq (1-\alpha) \frac{2}{d(x,y)}.\]
for any $\alpha \in [0,1]$ and any two vertices $x$ and $y$.

In the context of graphs, the following lemma shows that it is enough to consider only $\kappa(x,y)$ for $xy\in E(G)$. 
\begin{lemma}\cite{Ollivier, LLY}\label{lem:kappa_neighbor}
If $\kappa(x,y)\geq \kappa_0$ for any edge $xy\in E(G)$, then $\kappa(x,y)\geq \kappa_0$ for any pair of vertices $(x,y)$.
\end{lemma}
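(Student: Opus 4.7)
The plan is to reduce the general case to a path of edges via the triangle inequality for the Wasserstein ($L^1$-transportation) distance. Fix two vertices $x,y\in V$ with graph distance $d(x,y)=k$, and take a shortest path $x=x_0, x_1, \ldots, x_k=y$ in $G$, so that $x_{i-1}x_i\in E(G)$ for each $1\le i\le k$. The hypothesis says that $\kappa(x_{i-1},x_i)\geq \kappa_0$, which by definition means $W(m_{x_{i-1}}, m_{x_i})\leq (1-\kappa_0)\, d(x_{i-1},x_i)=1-\kappa_0$ for every $i$.

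The key step is to establish
\[
W(m_x, m_y) \;\leq\; \sum_{i=1}^{k} W(m_{x_{i-1}}, m_{x_i}).
\]
I would prove this by the standard gluing construction for couplings: for each $i$, let $A_i(u,v)$ be an optimal coupling between $m_{x_{i-1}}$ and $m_{x_i}$. Then iteratively construct a joint distribution on $V^{k+1}$ with marginals $m_{x_0}, m_{x_1}, \ldots, m_{x_k}$ whose consecutive two-dimensional marginals are the $A_i$'s (this is the standard ``glue along the common marginal'' argument). The induced coupling $A$ between $m_x = m_{x_0}$ and $m_y = m_{x_k}$ satisfies, using the triangle inequality of the graph distance inside the expectation,
\[
\sum_{u,v} A(u,v)\, d(u,v) \;\leq\; \sum_{i=1}^{k} \sum_{u,v} A_i(u,v)\, d(u,v) \;=\; \sum_{i=1}^{k} W(m_{x_{i-1}}, m_{x_i}).
\]
Alternatively, one may invoke the dual formulation given in the paper: for any $1$-Lipschitz $f$,
\[
\sum_{z} f(z)\bigl(m_x(z)-m_y(z)\bigr) \;=\; \sum_{i=1}^{k}\sum_{z} f(z)\bigl(m_{x_{i-1}}(z)-m_{x_i}(z)\bigr) \;\leq\; \sum_{i=1}^{k} W(m_{x_{i-1}}, m_{x_i}),
\]
and taking the supremum over $f$ gives the same inequality.

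Combining these two ingredients yields
\[
W(m_x, m_y) \;\leq\; k(1-\kappa_0) \;=\; d(x,y)(1-\kappa_0),
\]
so that $\kappa(x,y)=1-W(m_x,m_y)/d(x,y)\geq \kappa_0$, as desired. The only genuine content is the triangle inequality for $W$, which is a routine gluing/duality argument; everything else is bookkeeping along the shortest path. I expect no serious obstacle, since the statement is essentially a discrete analogue of the well-known geodesic monotonicity of Ollivier's coarse Ricci curvature.
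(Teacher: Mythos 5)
Your proof is correct. The paper itself gives no proof of this lemma (it only cites Ollivier and Lin--Lu--Yau), and your argument --- decompose along a shortest path, bound each edge step by $1-\kappa_0$, and chain via the triangle inequality for $W$ (which your duality version establishes cleanly using the paper's own dual formulation of the transportation distance) --- is exactly the standard argument in those references.
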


\section{Proof of Theorem \ref{thm:main_tool}}\label{sec:main_proof}

We first define an averaging operator associated to the random walk.

\begin{definition}[Discrete averaging operator]
Given a function $f: X \to \mathbb{R}$, let the averaging operator $M$ be defined as 
\[Mf(x) := \dss_{y \in V} f(y) \cdot m_x(y).\]
\end{definition}
The following proposition shows a Lipschitz contraction property in the metric measure space. We include its proof here for the sake of completeness.

\begin{proposition}[Lipschitz contraction]\cite{Ollivier, DGW}\label{prop:Lip-contraction}
Let $(G, d, m)$ be a random walk on a simple graph $G$. Let $\kappa \in \mathbb{R}$. Then the Ricci curvature of $G$ is at least $\kappa$, if and only if, for every $k$-Lipschitz function $f: X\to \mathbb{R}$, the function $Mf$ is $k(1-\kappa)$-Lipschitz.
\end{proposition}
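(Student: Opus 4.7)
The plan is to prove both directions by appealing to the Kantorovich--Rubinstein duality already stated in the paper, namely
\[
W(m_1,m_2) \;=\; \sup_{g\text{ 1-Lipschitz}} \sum_{z\in V} g(z)\bigl(m_1(z)-m_2(z)\bigr).
\]
First I would observe that there are two natural notions of ``$k$-Lipschitz'' on a graph: the edge-version \eqref{lipischtz} and the metric version $|f(u)-f(v)|\le k\,d(u,v)$. On a connected graph these are the same, since the triangle inequality along a shortest $u$--$v$ path turns an edge bound into a distance bound, and the distance bound specializes to the edge bound when $d(u,v)=1$. Thus I can freely pass between the two throughout the argument.

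For the forward direction ($\Rightarrow$), suppose $f$ is $k$-Lipschitz, fix any two vertices $x,y\in V$, and compute
\[
Mf(x)-Mf(y) \;=\; \sum_{z\in V} f(z)\bigl(m_x(z)-m_y(z)\bigr) \;=\; k\sum_{z\in V} \tfrac{f(z)}{k}\bigl(m_x(z)-m_y(z)\bigr).
\]
Since $f/k$ is $1$-Lipschitz, the duality formula bounds the right-hand side by $k\,W(m_x,m_y)$, and the hypothesis on the Ricci curvature then gives $k\,W(m_x,m_y)\le k(1-\kappa)\,d(x,y)$. Swapping $x$ and $y$ yields $|Mf(x)-Mf(y)|\le k(1-\kappa)\,d(x,y)$, so $Mf$ is $k(1-\kappa)$-Lipschitz.

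For the reverse direction ($\Leftarrow$), fix any $x,y\in V$ and apply duality in the other direction: choose a $1$-Lipschitz $f$ essentially attaining the supremum in $W(m_x,m_y)=\sup_f \sum_z f(z)(m_x(z)-m_y(z))=\sup_f\bigl(Mf(x)-Mf(y)\bigr)$. By hypothesis (applied with $k=1$) the function $Mf$ is $(1-\kappa)$-Lipschitz, so $Mf(x)-Mf(y)\le (1-\kappa)\,d(x,y)$; taking the supremum over $f$ gives $W(m_x,m_y)\le(1-\kappa)\,d(x,y)$, i.e.\ $\kappa(x,y)\ge\kappa$, which is the Ricci curvature bound.

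The argument is essentially a one-line application of duality in each direction, so there is no real obstacle; the only thing to be careful about is the notational point mentioned above, that edge-Lipschitz and distance-Lipschitz coincide on graphs, so that the duality formula (which is naturally stated with respect to the graph distance) lines up with the paper's convention for Lipschitz functions. One could also phrase the whole proof entirely in terms of the equivalent characterization $W(m_x,m_y)=\sup\{Mf(x)-Mf(y):f\text{ is }1\text{-Lipschitz}\}$ to avoid even mentioning the couplings.
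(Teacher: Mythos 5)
Your proof is correct. The only place it genuinely departs from the paper is the forward direction: the paper expands $Mf(y)-Mf(x)$ against the \emph{optimal coupling} $A$ of $m_x$ and $m_y$, rewrites it as $\sum_{u,v}\bigl(f(v)-f(u)\bigr)A(u,v)$, and bounds this by $k\sum_{u,v}d(u,v)A(u,v)=kW(m_x,m_y)$, whereas you rescale $f$ to $f/k$ and invoke the Kantorovich--Rubinstein dual formula directly, so that both implications become instances of the single identity $W(m_x,m_y)=\sup\{Mg(x)-Mg(y):\ g\ \textrm{$1$-Lipschitz}\}$. The two arguments are equally short; yours is more uniform, while the paper's primal version avoids dividing by $k$ (your rescaling tacitly assumes $k>0$; the case $k=0$ is trivial since $f$, hence $Mf$, is constant, but it deserves a word). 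Your observation that the edge-Lipschitz condition \eqref{lipischtz} and the condition $|f(u)-f(v)|\le k\,d(u,v)$ coincide on a connected graph is a point the paper glosses over, and it is actually needed for the duality formula (stated for $d$-Lipschitz functions) to line up with the statement of the proposition, so making it explicit is a small improvement in rigor. Your reverse direction is identical to the paper's.
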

\begin{proof}
Suppose that the Ricci curvature of $G$ is at least $\kappa$. For $x ,y\in V$, let $A: V\times V \to [0,1]$ be the optimal coupling measure of $m_x$ and $m_y$.
\begin{align*}
    Mf(y)-Mf(x) &= \dss_{u\in V} f(u) m_y(u) - \dss_{u\in V} f(u)m_x(u)\\
                &= \dss_{u\in V} f(u) \dss_{v\in V} A(v,u) - \dss_{u\in V} f(u) \dss_{v\in V} A(u,v)\\
                &= \dss_{u,v} \lp f(v)-f(u)\rp A(u,v)\\
                &\leq k\dss_{u,v} d(u,v) A(u,v)\\
                & = kW(m_x, m_y)\\
                &= k(1-\kappa(x,y))d(x,y)
\end{align*}
Conversely, suppose that whenever $f$ is $1$-Lipschitz, $Mf$ is $(1-\kappa)$-Lipschitz. Then by the duality theorem for the transportation distance, we have that for all $x,y \in V(G)$,
\begin{align*}
    W(m_x, m_y) &= \ds\sup_{\textrm{$f$ $1$-Lipschitz}} \dss_{z\in V} f(z) \lp m_x(z)-m_y(z)\rp \\
                &= \ds\sup_{\textrm{$f$ $1$-Lipschitz}} M f(x) -Mf(y)\\
                &\leq (1-\kappa) d(x,y).
\end{align*}
It follows that 
$$\kappa(x,y) = 1 - \frac{W(m_x, m_y)}{d(x,y)} \geq \kappa.$$
\end{proof}


\begin{remark}\label{rmk:var_bound}
Note that for any constant $c$, 
\begin{equation}
    \Var(f) = E\sqbrac{(f-c)^2} - \paren{E[f]-c}^2.
\end{equation}

\noindent Thus for any $x\in V$ and an $\alpha$-Lipschitz function $f: \textrm{Supp } m_x \to \mathbb{R}$,  
\begin{align*}
  \Var_{m_x}f &\leq  E_{m_x}\sqbrac{(f-f(x))^2}\\
              &\leq \dss_{y\in \textrm{Supp $m_x$}} (f(y)-f(x))^2 m_x(y)\\
              &\leq \alpha^2.
\end{align*}
\end{remark}

\begin{lemma}\cite{LLY, Ollivier}\label{lem:kappa_diam}
Let $G$ be a finite graph with Ricci curvature at least $\kappa > 0$. Then 
$$\kappa \leq \frac{2}{\textrm{diam}(G)}.$$
Moreover, if $m_x(x) = \alpha$ for all $x\in V(G)$, then 
$\kappa \leq (1-\alpha)\frac{2}{\textrm{diam}(G)}$.
\end{lemma}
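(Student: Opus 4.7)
The plan is to exhibit a specific pair of vertices $x,y$ for which $\kappa(x,y)$ is directly bounded above by $2/\textrm{diam}(G)$ (respectively $2(1-\alpha)/\textrm{diam}(G)$). Since the hypothesis that the Ricci curvature of $G$ is at least $\kappa$ means $\kappa\leq \kappa(x,y)$ for every pair, such a bound on a single pair immediately yields the claim. The natural choice is any pair $x,y$ realizing the diameter, so I would set $D=\textrm{diam}(G)$ and fix such $x,y$ with $d(x,y)=D$.

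To bound $\kappa(x,y)$, I would work with the primal definition of the transportation distance. Let $A$ be any coupling between $m_x$ and $m_y$. Every pair $(u,v)$ in the support of $A$ satisfies $u\in N(x)$ and $v\in N(y)$, and the triangle inequality gives
\[ d(u,v)\geq d(x,y)-d(x,u)-d(y,v)=D-d(x,u)-d(y,v). \]
Summing against $A(u,v)$ and using the marginal constraints $\sum_v A(u,v)=m_x(u)$ and $\sum_u A(u,v)=m_y(v)$ yields
\[ W(m_x,m_y)\geq D-\dss_{u\in V} d(x,u)\,m_x(u)-\dss_{v\in V} d(y,v)\,m_y(v). \]
Since every vertex in $N(x)$ has distance $0$ or $1$ from $x$, the expectation $\sum_u d(x,u)\,m_x(u)$ is at most $1$, and similarly for $y$. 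Hence $W(m_x,m_y)\geq D-2$, so
\[ \kappa(x,y)=1-\frac{W(m_x,m_y)}{D}\leq \frac{2}{D}, \]
which establishes the first inequality.

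For the moreover part, the assumption $m_x(x)=\alpha$ at every vertex sharpens the above bound: the mass $\alpha$ at $x$ contributes $0$ to the expected distance while the remaining mass $1-\alpha$ lies on $\Gamma(x)$ at distance exactly $1$. Thus $\sum_u d(x,u)\,m_x(u)=1-\alpha$ exactly, and likewise $\sum_v d(y,v)\,m_y(v)=1-\alpha$. Substituting into the lower bound on $W$ gives $W(m_x,m_y)\geq D-2(1-\alpha)$ and therefore $\kappa(x,y)\leq 2(1-\alpha)/D$, finishing the proof.

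There is essentially no obstacle here; the entire argument reduces to one application of the triangle inequality, integrated against a coupling, together with the observation that $m_x$ is supported in $N(x)$. The only point requiring mild care is that the triangle inequality must be applied uniformly over the support of the coupling, after which the marginal constraints instantly turn the bound into an expectation statement about $m_x$ and $m_y$ individually.
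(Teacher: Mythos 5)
Your proof is correct. The paper does not actually prove this lemma---it is quoted from \cite{LLY, Ollivier}---but your argument (apply the triangle inequality $d(u,v)\geq d(x,y)-d(x,u)-d(y,v)$ to a diameter-realizing pair, integrate against an arbitrary coupling, and use that $m_x$ is supported on $N(x)$ so its expected displacement is at most $1$, exactly $1-\alpha$ in the lazy case) is the standard one and matches the cited proofs up to duality: one can equivalently test the dual formulation of $W$ against the $1$-Lipschitz function $d(x,\cdot)$.
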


The following lemma is similar to Lemma 38 in \cite{Ollivier}.

\begin{lemma}\label{lem:average-operator}
Let $\phi:V(G) \to \mathbb{R}$ be an $\alpha$-Lipschitz function with $\alpha \leq 1$. Then for $x\in V(G)$, we have 
\[\paren{Me^{\lambda \phi}}(x) \leq e^{\lambda M \phi(x)+ \frac{1}{2}\lambda^2 e^{2\lambda} \alpha^2}.\]
\end{lemma}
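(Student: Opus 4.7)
The plan is to reduce the statement to a standard Hoeffding/Bernstein-type moment-generating-function bound by centering $\phi$ at its local mean $M\phi(x)$ and then applying a pointwise inequality of the form $e^u \leq 1 + u + \tfrac{u^2}{2}e^{|u|}$. More concretely, for fixed $x$ I would define $\psi(y) := \phi(y)-M\phi(x)$, so that $\psi$ is a function on $V$ with $E_{m_x}[\psi]=0$, and factor
\[
Me^{\lambda\phi}(x) \;=\; e^{\lambda M\phi(x)}\,E_{m_x}\!\bigl[e^{\lambda\psi}\bigr].
\]
The desired bound then reduces to showing $E_{m_x}[e^{\lambda\psi}] \leq \exp\bigl(\tfrac{1}{2}\lambda^2 e^{2\lambda}\alpha^2\bigr)$.

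The two quantitative inputs I would use are (i) a uniform sup-norm bound $|\psi(y)|\le 2\alpha$ for $y$ in the support of $m_x$, which follows because $\phi$ is $\alpha$-Lipschitz, every pair of vertices in $N(x)$ is within graph distance $2$, and $M\phi(x)$ is a convex combination of the values of $\phi$ on $N(x)$; and (ii) the variance bound $E_{m_x}[\psi^2]=\Var_{m_x}(\phi)\le \alpha^2$ supplied by Remark~\ref{rmk:var_bound}.

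Next I would verify the scalar inequality $e^u\le 1+u+\tfrac{u^2}{2}e^{|u|}$ for every real $u$. This follows term-by-term from the Taylor series of $e^u$: for each $k\ge 2$ we have $u^k/k! \le |u|^k/k! \le u^2\,|u|^{k-2}/\bigl(2(k-2)!\bigr)$ since $k!\ge 2(k-2)!$; summing gives $e^u-1-u\le \tfrac{u^2}{2}\sum_{j\ge 0}|u|^j/j! = \tfrac{u^2}{2}e^{|u|}$. Substituting $u=\lambda\psi(y)$, taking expectation under $m_x$, using $E_{m_x}[\psi]=0$, the bound $|\lambda\psi|\le 2\lambda\alpha\le 2\lambda$ (here $\alpha\le 1$ is used, and we treat $\lambda\ge 0$ as in the Chernoff setup), and the variance estimate, yields
\[
E_{m_x}\!\bigl[e^{\lambda\psi}\bigr] \;\le\; 1 + \tfrac{1}{2}\lambda^2 e^{2\lambda}\,E_{m_x}[\psi^2] \;\le\; 1 + \tfrac{1}{2}\lambda^2 e^{2\lambda}\alpha^2.
\]

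Finally, applying $1+t\le e^{t}$ and multiplying back by $e^{\lambda M\phi(x)}$ gives the claimed inequality. The only step that is not immediate from the definitions and Remark~\ref{rmk:var_bound} is the scalar inequality $e^u\le 1+u+\tfrac{u^2}{2}e^{|u|}$; that is the main (though still elementary) technical obstacle. Everything else is a careful bookkeeping: centering at $M\phi(x)$ to kill the first-order term, bounding the sup norm of $\psi$ by $2\alpha$ using the diameter-$2$ structure of $N(x)$, and invoking $\alpha\le 1$ to absorb $e^{2\lambda\alpha}$ into $e^{2\lambda}$.
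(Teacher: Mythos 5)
Your proof is correct and follows essentially the same route as the paper: a second-order expansion of the exponential whose first-order term vanishes after centering at $M\phi(x)$, with the second-order term controlled by the variance bound $\Var_{m_x}\phi\le\alpha^2$ from Remark~\ref{rmk:var_bound} and the diameter-$2$ bound $|\psi|\le 2\alpha\le 2\alpha\cdot 1$ supplying the $e^{2\lambda}$ factor. The only cosmetic difference is that you establish the pointwise inequality $e^u\le 1+u+\tfrac{u^2}{2}e^{|u|}$ via the power series, whereas the paper invokes the Taylor--Lagrange bound $Eg(Y)\le g(EY)+\tfrac12(\sup g'')\Var Y$ with $g(y)=e^{\lambda y}$; both implicitly assume $\lambda\ge 0$, which is all that is needed in the application.
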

\begin{proof}
For any smooth function $g$ and any real-valued random variable $Y$, a Taylor expansion with Lagrange remainder gives 
\[Eg(Y)\leq g(EY) + \frac{1}{2}(\textrm{sup }g'')\Var Y.\]
Applying this with $g(Y) = e^{\lambda Y}$, we get 
\[(Me^{\lambda \phi})(x) = E_{m_x} e^{\lambda \phi} \leq e^{\lambda M \phi(x)}+ \frac{\lambda^2}{2} \paren{\ds\sup_{\textrm{Supp }m_x} e^{\lambda \phi}} \Var_{m_x} \phi.\]
Note that diam Supp $m_x \leq 2$ and $\phi$ is $\alpha$-Lipschitz, it follows that   
\[\displaystyle\sup_{\textrm{Supp }m_x} \phi \leq E_{m_x} \phi + \alpha\cdot  (\textrm{diam Supp } m_x) \leq E_{m_x} \phi + 2\alpha .\]
Moreover, by Remark \ref{rmk:var_bound}, $\Var_{m_x}\phi \leq \alpha^2$. Hence we have that
\begin{align*}
    \paren{Me^{\lambda \phi}}(x) &\leq e^{\lambda M \phi(x)} + \frac{\lambda^2}{2} (\alpha^2) e^{\lambda M\phi(x) + 2\lambda\alpha} \\
            &\leq e^{\lambda M \phi(x)} \lp  1 + \frac{\lambda^2}{2} \alpha^2 e^{2\lambda\alpha}\rp\\
            &\leq \exp\lp\lambda M\phi(x) + \frac{1}{2}\lambda^2\alpha^2 e^{2\lambda \alpha}\rp.
\end{align*}
\end{proof}

\begin{proof}[Proof of Theorem \ref{thm:main_tool}]

First, note that since $f$ is $1$-Lipschitz, it follows that $\abs*{f(x)-f(y)}\leq diam(G)$ for any $x,y\in V(G)$. Hence if $t > \frac{2}{\kappa}$, then 

\[\Prob{\abs*{f-E_{\nu}[f]} \geq t} \leq \Prob{\abs*{f-E_{\nu}[f]} > \frac{2}{\kappa}} \leq \Prob{\textrm{diam}(G) > \frac{2}{\kappa}}=0,\]
in which case we are done. So from now on, assume $t \leq 2/\kappa$. 

Apply Lemma \ref{lem:average-operator} iteratively and use Proposition \ref{prop:Lip-contraction}, we obtain that for any $i\geq 1$,
\begin{align*}
    M^i(e^{\lambda f}) &\leq e^{\lambda M^i f}\cdot \ds\prod_{j=0}^{i-1} \exp{\lp\frac{1}{2}\lambda^2 (1-\kappa)^{2j} e^{2\lambda}\rp}\\
    &\leq \exp{\lp\lambda M^i f + \frac{1}{2}\lambda^2 e^{2\lambda}\dss_{j=0}^{i-1} (1-\kappa)^{2j}\rp.}
\end{align*}
Meanwhile, $(M^i e^{\lambda f})(x)$ tends to $E_{\nu} e^{\lambda f}$.
Hence 
\begin{align*}
    E_{\nu} e^{\lambda f} &\leq \lim_{i\to \infty}  \exp{\lp\lambda M^i f + \frac{1}{2}\lambda^2 e^{2\lambda}\dss_{j=0}^{i-1} (1-\kappa)^{2j}\rp}\\
                &\leq  \exp{\lp \lambda E_{\nu} f + \frac{\lambda^2 e^{2\lambda}}{2\kappa(2-\kappa)} \rp.}
\end{align*}

Let $\lambda_0$ be the root of the equation 
$x \cdot e^{2x} = 2(2-\kappa)$ and set $\lambda = \frac{t \kappa \lambda_0}{2}$.
Note that since $t \leq \frac{2}{\kappa}$, we have $\lambda \leq \lambda_0$. Now, we have
\begin{align}
    \Prob{f-E_{\nu}f \geq t} &\leq \Prob{e^{\lambda f} \geq e^{t\lambda + \lambda E_{\nu}f}} \nonumber\\
            &\leq E_{\nu} e^{\lambda f}\cdot e^{-t \lambda -\lambda E_{\nu} f} \nonumber\\
            &\leq \exp{\lp  -t \lambda + \frac{\lambda^2 e^{2\lambda}}{2\kappa(2-\kappa)}\rp} \nonumber\\
            &\leq \exp{\lp  -t\lambda + \frac{\lambda t \lambda_0 e^{2\lambda}}{4(2-\kappa)}\rp} \label{eq:optim_const}\\
            &\leq \exp{\lp  -t\lambda + \frac{\lambda t \lambda_0 e^{2\lambda_0}}{4(2-\kappa)}\rp} \nonumber\\
            &= \exp{\lp  -\frac{1}{2}t \lambda\rp} \nonumber\\
            &\leq \exp{\lp -\frac{t^2\kappa \lambda_0}{4}\rp} \nonumber
\end{align}
where $\lambda_0$ is the solution to $x \cdot e^{2x} = 2(2-\kappa)$. 
If $G$ is the complete graph, then $|f-E_\nu(f)|\leq 1$ holds for all vertices.
Inequality \ref{thm:conc0} holds. If $G$ is not the complete graph, then
we must have $\kappa\leq 1$ (otherwise, contradiction to $diam(G)\leq \frac{2}{\kappa}$). Thus $\lambda_0\leq 0.60108...$, which is the root of
$x \cdot e^{2x}=2.$ We have $\frac{\lambda_0}{4}>\frac{1}{7}.$
Hence we obtain that
\[\Prob{f-E_{\nu}f \geq t} \leq \exp\lp -\frac{t^2\kappa}{7}  \rp.\]
If $\kappa \to 0$ as $|V(G)| \to \infty$ (which is true in all the examples in Section \ref{sec:app}), then we have $\lambda_0\to 0.80290...$ which is the root of
$x \cdot e^{2x}=4.$ We have $\frac{\lambda_0}{4}>\frac{1}{5}.$ We have
\[\Prob{f-E_{\nu}f \geq t} \leq \exp\lp -\frac{t^2\kappa}{5} \rp.\]
Furthermore, if $\kappa \to 0$ and $t\kappa \to 0$ as $|V(G)| \to \infty$, then continuing from inequality \eqref{eq:optim_const}, we have that $e^{2\lambda} \to 1$ and $(2-\kappa) \to 2$ (as $|V(G)| \to \infty$). By setting $\lambda_0 = 4$, we have
 \begin{align*}
     \Prob{f-E_{\nu}f \geq t} & \leq \exp{\lp  -t\lambda + \frac{\lambda t \lambda_0 e^{2\lambda}}{4(2-\kappa)}\rp} \\
                             &\leq \exp{\lp-\paren{\frac{1}{2}+o(1)}t\lambda\rp}\\
                             &\leq \exp{\lp -\paren{\frac{1}{4}+o(1)}t^2\kappa \lambda_0 \rp} \\
                             &\leq \exp{\lp (1+o(1)) t^2\kappa\rp.}
 \end{align*}
The lower tail can be obtained from the upper tail by changing $f$ to $-f$ since $-f$ is also $1$-Lipschitz.
\end{proof}

\section{Applications to random models of configurations}\label{sec:app}

In order to apply Theorem \ref{thm:main_tool} to a finite probability space $(\Omega, \mu)$, we will construct a graph $H$ with the vertex set $\Omega$ such that $\mu$ is the invariant distribution over a proper random walk $m$ on $H$. 
We call the pair $(H, m)$ a {\em geometrization} of $(\Omega, \mu)$. In this section, we will give
geometrization of four popular random model of configuarations.

\subsection{Vertex-Lipschitz functions on \texorpdfstring{$G(n,p)$}{G(n,p)}}
Let $H$ be the graph such that $V(H)$ is the set of all labeled graphs with $n$ vertices. Moreover, two graphs $G_1, G_2 \in V(H)$ are adjacent in $H$ if and only if there exists some $v$ such that $G_1 - v = G_2 - v$.
Now define a random walk $m$ on $H$ as follows: Let $G \in V(H)$. Define

$$m_G(G') = \begin{cases}
    \frac{1}{n} \dss_{\substack{v\in V(G)\\G-v = G'-v}}p^{d_{G'}(v)}(1-p)^{n-1-d_{G'}(v)} & \textrm{ if $G'\in N_H(G)$,} \\
    0      &  \textrm{ otherwise.}
\end{cases}$$

\begin{proposition}
Let $\nu$ be the unique invariant distribution of the random walk defined above. A random graph $G$ picked according to $\nu$, satisfies that $\nu(G) = p^{e(G)}(1-p)^{\binom{n}{2}-e(G)}$.
\end{proposition}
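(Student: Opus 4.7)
The plan is to verify that the $G(n,p)$ measure $\pi(G) := p^{e(G)}(1-p)^{\binom{n}{2}-e(G)}$ is stationary for $m$ by decomposing the walk into per-vertex Gibbs-sampler updates, and then to invoke ergodicity for uniqueness.

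First, I would rewrite
$$m_G(G') \;=\; \frac{1}{n}\sum_{v\in V(G)} M_v(G,G'), \qquad M_v(G,G') = \begin{cases} p^{d_{G'}(v)}(1-p)^{n-1-d_{G'}(v)} & \text{if } G-v=G'-v, \\ 0 & \text{otherwise,}\end{cases}$$
exhibiting $m$ as a uniform mixture over $v$ of the single-site Glauber updates $M_v$, each of which keeps all edges not incident to $v$ fixed and redraws the star at $v$ from independent $\text{Bernoulli}(p)$ entries. Because a uniform mixture of kernels sharing a common invariant distribution still has that distribution as invariant, it suffices to prove $\pi M_v = \pi$ for each $v$ separately.

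The identity $\pi M_v = \pi$ is a one-line Gibbs-sampler check. Using $e(G) = e(G-v) + d_G(v)$ together with $\binom{n}{2} = \binom{n-1}{2} + (n-1)$, the measure factors as
$$\pi(G) \;=\; \Bigl[p^{e(G-v)}(1-p)^{\binom{n-1}{2}-e(G-v)}\Bigr]\cdot p^{d_G(v)}(1-p)^{n-1-d_G(v)}.$$
Plugging this into $\sum_G \pi(G)\,M_v(G,G')$ and using $G-v = G'-v$ on the support of $M_v$, the bracketed factor and the term $p^{d_{G'}(v)}(1-p)^{n-1-d_{G'}(v)}$ pull out, leaving $\sum_{G:\,G-v=G'-v}p^{d_G(v)}(1-p)^{n-1-d_G(v)}$. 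This sum runs over the $2^{n-1}$ possible neighborhoods of $v$ and equals $1$ by the binomial theorem; the remaining factors reassemble to $\pi(G')$.

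For uniqueness, I would note that the chain is ergodic: aperiodicity follows because $M_v(G,G)>0$ for every $v$ (hence $m_G(G)>0$) when $0<p<1$, and irreducibility because any $G$ can be transformed into any $G'$ in at most $n$ steps by successively resampling the star at each vertex. The standard convergence theorem for finite Markov chains then yields $\nu = \pi$. There is no genuine obstacle in this argument --- $m$ is by construction the single-site Glauber dynamics for $G(n,p)$, so invariance of $\pi$ is forced. The only minor book-keeping point is that two $H$-adjacent graphs may agree outside more than one vertex (precisely when they differ on a single edge $uv$, both endpoints witness adjacency), but the vertex-by-vertex decomposition accommodates this automatically.
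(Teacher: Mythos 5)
Your proof is correct and is essentially the same argument as the paper's: both verify stationarity by factoring $p^{e(G)}(1-p)^{\binom{n}{2}-e(G)}$ into the part supported on $G-v$ times the star at $v$, and both collapse the sum over the $2^{n-1}$ possible neighborhoods of $v$ via the binomial theorem. Your packaging of $m$ as a uniform mixture of single-site Glauber kernels $M_v$ is a slightly cleaner way to organize the identical computation, and your ergodicity remarks (aperiodicity from $m_G(G)>0$, irreducibility by resampling each star) fill in what the paper only asserts.
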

\begin{proof} Observe that $H$ is not bipartite thus the random walk is ergodic.
It suffices to show that the distribution $\nu'(G) = p^{e(G)}(1-p)^{\binom{n}{2}-e(G)}$ for every $G$ is an invariant distribution for the random walk. Indeed, for every fixed $G \in V(H)$,
\begin{align*}
     &  \dss_{G'\in H} \nu'(G') m_{G'}(G) \\ 
  =  &  \dss_{v\in V} \dss_{G'-v= G-v} \nu'(G')  \frac{1}{n} p^{d_{G}(v)}(1-p)^{n-1-d_{G}(v)} \\
    =&  \dss_{v\in V} \frac{1}{n}p^{d_G(v)} (1-p)^{n-1-d_G(v)} \dss_{G'-v= G-v} \nu'(G')\\
   = &  \dss_{v\in V} \frac{1}{n}p^{d_G(v)} (1- p)^{n-1-d_G(v)} \cdot\\
    & \lp p^{e(G)-d_G(v)} (1-p)^{\binom{n-1}{2}-(e(G)-d_G(v))} \dss_{i=0}^{n-1} \binom{n-1}{i} p^{i} (1-p)^{n-1-i}   \rp\\
  = & \frac{1}{n}p^{e(G)} (1- p)^{\binom{n}{2}-e(G)} \dss_{v\in V}  \dss_{i=0}^{n-1} \binom{n-1}{i} p^{i} (1-p)^{n-1-i}   \\
  = & p^{e(G)} (1- p)^{\binom{n}{2}-e(G)}\\
  = & \nu'(G).
\end{align*}

\end{proof}

\begin{lemma}
Let $H$ and the random walk $m$ be defined as above. Then $$\kappa(G_1, G_2) \geq \frac{1}{n}$$ for all $G_1, G_2 \in V(H)$.
\begin{proof}
Again, by Lemma \ref{lem:kappa_neighbor}, we can assume that $G_1, G_2$ are neighbors in $H$. It then follows from definition that
\[\kappa(G_1, G_2) = 1- W(m_{G_1},m_{G_2}).\] 

Assume that $v$ is the unique vertex such that $G_1-v = G_2 -v$. When $G_1$ and $G_2$ differ by an edge, it is possible that there are two vertices $v$ satisfying $G_1-v = G_2 -v$. We remark that the analysis is similar.
Consider the support of $m_{G_1}$. For each $G_1' \in \Gamma(G_1)\backslash \{G_2\}$, we will match $G_1'$ with a distinct graph $\phi(G_1') \in N(G_2)$. There are two possible cases:
\begin{description}
\item Case $1$: $G_1 - v = G_1' -v$. Then it follows that $G_1'-v = G_2 -v$ and we let $\phi(G_1') = G_1'$.
\item Case $2$: $G_1 -u = G_1'- u$ for some $u\neq v$. In this case, we claim that for each $G_1'$ such that $G_1 -u = G_1' -u$, there exists a unique $G_2' = \phi(G_1')$ such that $G_2' - u = G_2 - u$ and $G_1' - v = G_2' - v$.
Indeed, let $G_2'$ be obtained from $G_2$ by replacing the neighbors of $u$ in $G_2$ by the neighbors of $u$ in $G_1'$. It's not hard to see that $G_2'-u = G_2 -u$ and $G_1' - v = G_2'-v$.
\end{description}

Let us now define a coupling $A$ (not necessarily optimal) between $m_{G_1}$ and $m_{G_2}$. Define $A: V(H) \times V(H) \to \mathbb{R}$ as follows:
\begin{equation}
    A(G_1', G_2') = \begin{cases}
     \frac{1}{n} \dss_{u\neq v} p^{d_{G_1}(u)}(1-p)^{n-1-d_{G_1}(u)} & \textrm{ if } G_2' = G_2, G_1' = G_1,\\
      m_{G_1}(G_1')& \textrm{ if } G_1' \in \Gamma(G_1)\backslash\{G_2\} \textrm{ and $G_2' = \phi(G_1')$,}\\
        0    &\textrm{ otherwise. }
    \end{cases}
\end{equation}
It follows that 
\begin{align*}
W(m_{G_1},m_{G_2}) &\leq \dss_{G_1',G_2'} A(G_1', G_2') d(G_1', G_2') \\
&\leq  \frac{1}{n} \dss_{u\neq v}\dss_{G'-u= G_1-u}\frac{1}{n} p^{d_{G'}(u)}(1-p)^{n-1-d_{G'}(u)}\\
&\leq  \frac{1}{n} \dss_{u\neq v} \dss_{i=0}^{n-1} \binom{n-1}{i} p^i (1-p)^{n-1-i} \\
&\leq \frac{n-1}{n}.
\end{align*}
Thus 
$$\kappa(G_1, G_2) \geq 1 - W(m_{G_1}, m_{G_2}) \geq \frac{1}{n}.$$

\end{proof}
\end{lemma}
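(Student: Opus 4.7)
The plan is to invoke Lemma~\ref{lem:kappa_neighbor} to reduce the problem to the case where $G_1$ and $G_2$ are adjacent in $H$, so that there exists a vertex $v$ with $G_1 - v = G_2 - v$. For such a pair I will exhibit an explicit (not necessarily optimal) coupling of $m_{G_1}$ and $m_{G_2}$ whose expected transportation cost is at most $(n-1)/n$; plugging into the definition then gives $\kappa(G_1, G_2) = 1 - W(m_{G_1}, m_{G_2}) \geq 1/n$ immediately.

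To build the coupling, I would think of one step of the random walk from $G_i$ as the following two-stage procedure: first pick a vertex $u \in [n]$ uniformly at random, then independently include each other vertex in a new neighborhood $T$ of $u$ with probability $p$, producing $G_i'$ with $G_i - u = G_i' - u$. I would couple the two walks by using the same $u$ and the same random set $T$ on both sides. When $u = v$, the outputs coincide: since $G_1 - v = G_2 - v$ and both graphs are then overwritten at $v$ by the same $T$, we get $G_1' = G_2'$ with transportation cost $0$. When $u \neq v$, the outputs $G_1'$ and $G_2'$ satisfy $G_1' - v = G_2' - v$, because edges not incident to $u$ or $v$ are preserved from $G_1$ and $G_2$ (hence equal by hypothesis) and the neighborhood at $u$ is the same new set $T$ on both sides; thus $G_1'$ and $G_2'$ remain adjacent in $H$, contributing cost at most $1$.

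Summing over the choice of $u$, the expected cost of this coupling is at most $\frac{1}{n}\cdot 0 + \frac{n-1}{n}\cdot 1 = \frac{n-1}{n}$, giving $W(m_{G_1}, m_{G_2}) \leq (n-1)/n$ and therefore $\kappa(G_1, G_2) \geq 1/n$ as claimed. The core calculation amounts to verifying that this jointly defined $A(G_1', G_2')$ indeed has the correct marginals $m_{G_1}$ and $m_{G_2}$, which follows from the fact that we pair the two random choices symmetrically in $u$ and $T$.

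The subtlety I expect to handle most carefully is the degenerate case in which $G_1$ and $G_2$ differ in exactly one edge $\{v, w\}$: then both $v$ and $w$ are valid "distinguishing" vertices, i.e., $G_1 - v = G_2 - v$ and $G_1 - w = G_2 - w$. The coupling construction depends on fixing one such vertex, so I would need to check that either choice yields the stated bound (which it does, by symmetry) and that the definition of $\phi$ matching neighbors of $G_1$ to neighbors of $G_2$ remains injective in this case. Everything else should reduce to bookkeeping with the binomial probabilities arising from the random-walk transition kernel.
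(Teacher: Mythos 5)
Your proposal is correct and takes essentially the same route as the paper: reduce to adjacent $G_1,G_2$ via Lemma~\ref{lem:kappa_neighbor}, then couple the two walks so that the resampled vertex $u$ and its new neighborhood are shared, which is exactly the paper's matching $\phi$ (your $u=v$ case is its Case 1, your $u\neq v$ case is its Case 2), yielding cost at most $\frac{n-1}{n}$ and hence $\kappa\geq\frac{1}{n}$. Your probabilistic "shared randomness" phrasing makes the marginal check cleaner than the paper's explicit coupling table, but it is the same coupling.
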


It follows by Theorem \ref{thm:main_tool} that for any vertex-Lipschitz function $f$ on graphs, we have that 
$$\Prob{\abs*{f-E[f]} \geq t} \leq 2\exp \paren{-\frac{t^2}{5n}},$$
which in this context has the same strength as the Azuma-Hoeffding inequality on vertex-exposure martingale.

\subsection{Edge-Lipschitz functions on \texorpdfstring{$G(n,M)$}{G(n,M)}}\label{sec: randomMedges}
Let $G \sim G(n,M)$ be a random graph with $n$ vertices and $M$ edges. Let $H$ be the graph such that $V(H)$ is the set of all labeled graphs with $n$ vertices and $M$ edges. Moreover, two graphs $G_1, G_2 \in V(H)$ are adjacent in $H$ if and only if there exist two distinct vertex pairs $e_1, e_2$ such that $e_1 \in E(G_1)\backslash E(G_2)$, $e_2 \in E(G_2)\backslash E(G_1)$ and $G_1 - e_1  = G_2 - e_2$. In other words, $G_1, G_2$ are adjacent in $H$ if one can be obtained from the other by swapping an edge with a non-edge. It is easy to see that $H$ is a connected regular graph. Moreover, for every $G \in V(H)$, $d_H(G) = M \paren{\binom{n}{2}-M}$.

The following proposition is clear from the definition of $H$.
\begin{proposition}
If $G_1, G_2$ are adjacent in $H$, then there exists a unique pair of distinct vertex pairs $e_1, e_2$ such that $e_1 \in E(G_1)\backslash E(G_2)$, $e_2 \in E(G_2)\backslash E(G_1)$ and $G_1 - e_1  = G_2 - e_2$.
\end{proposition}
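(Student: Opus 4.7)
The plan is to unpack the defining condition of adjacency in $H$ purely at the level of edge sets. Existence of at least one pair $(e_1,e_2)$ is given by the definition of adjacency; the content of the proposition is uniqueness. The key observation is that the equality $G_1 - e_1 = G_2 - e_2$ of labeled graphs on the same vertex set is equivalent to the edge-set identity
\[
E(G_1)\setminus\{e_1\} \;=\; E(G_2)\setminus\{e_2\}.
\]
Once this is in hand, the whole statement becomes bookkeeping about symmetric differences.

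First, I would rewrite the defining condition in the form above and immediately observe that it forces the symmetric difference $E(G_1)\triangle E(G_2)$ to be contained in $\{e_1,e_2\}$. Next, since $e_1 \in E(G_1)\setminus E(G_2)$ and $e_2 \in E(G_2)\setminus E(G_1)$, we in fact have $e_1 \neq e_2$ and both lie in $E(G_1)\triangle E(G_2)$, so the containment is an equality:
\[
E(G_1)\triangle E(G_2) \;=\; \{e_1,e_2\},
\]
with $e_1$ being the unique element of $E(G_1)\setminus E(G_2)$ and $e_2$ the unique element of $E(G_2)\setminus E(G_1)$.

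Finally, suppose $(e_1',e_2')$ is any other pair satisfying the definition of adjacency, so that $e_1' \in E(G_1)\setminus E(G_2)$, $e_2' \in E(G_2)\setminus E(G_1)$, and $G_1-e_1' = G_2-e_2'$. By the same reasoning applied to this pair, $e_1'$ lies in $E(G_1)\setminus E(G_2)$, which we have just identified as the singleton $\{e_1\}$; hence $e_1'=e_1$, and symmetrically $e_2'=e_2$. This establishes uniqueness.

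There is no real obstacle here: the proposition is essentially a tautological consequence of the definition of adjacency in $H$, and the only subtlety worth mentioning is that the pair $(e_1,e_2)$ can be read off directly as $(E(G_1)\triangle E(G_2))\cap E(G_1)$ and $(E(G_1)\triangle E(G_2))\cap E(G_2)$. I would write the argument in a few lines and take care only to note explicitly why $e_1 \neq e_2$, so that the symmetric difference has size exactly two rather than possibly degenerating.
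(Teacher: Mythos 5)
Your argument is correct: the identity $E(G_1)\setminus\{e_1\}=E(G_2)\setminus\{e_2\}$ does force $E(G_1)\triangle E(G_2)=\{e_1,e_2\}$, which pins down both edges and gives uniqueness. The paper offers no proof at all (it declares the proposition ``clear from the definition of $H$''), and your symmetric-difference bookkeeping is precisely the intended justification, so there is nothing to compare beyond noting that your write-up fills in the omitted details correctly.
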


\noindent Now define a random walk $m$ on $H$ as follows:
Let $G \in V(H)$. Define 
$$m_G(G') = \begin{cases}
 \frac{1}{M \paren{\binom{n}{2}-M}+1}  & \textrm{ if $G'\in N_H(G)$,} \\
    0      &  \textrm{ otherwise.}
\end{cases}$$
It's easy to see that for a fixed $G$, $\sum_{G'} m_G(G') = 1$.

\begin{proposition}
Let $\nu$ be the unique invariant distribution of the random walk defined above. A random graph $G$ picked according to $\nu$, is equally likely to be one of the $\binom{\binom{n}{2}}{M}$ graphs that have $M$ edges.
\end{proposition}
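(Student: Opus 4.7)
The plan is to show that the proposed random walk is symmetric in a strong sense and that the graph $H$ is regular, so that the uniform measure on $V(H)$ is automatically stationary; ergodicity then forces the (unique) invariant distribution $\nu$ to be this uniform measure.

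First, I would check that $H$ is a regular graph. Given any labeled $M$-edge graph $G$, a neighbor $G'$ of $G$ in $H$ is specified by a choice of edge $e_1 \in E(G)$ to delete together with a non-edge $e_2 \notin E(G)$ to add; by the uniqueness proposition stated just above, distinct ordered pairs $(e_1,e_2)$ yield distinct neighbors $G'$. Hence $|\Gamma_H(G)| = M(\binom{n}{2}-M)$ for every $G$, and the closed neighborhood $N_H(G)$ has size $M(\binom{n}{2}-M)+1$ independent of $G$. In particular $m_G(G')$ is a function only of whether $G' \in N_H(G)$ and does not depend on $G$ itself.

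Second, the adjacency relation in $H$ is manifestly symmetric: if $G_2$ is obtained from $G_1$ by swapping an edge for a non-edge, the inverse swap returns $G_2$ to $G_1$. Hence $m_G(G') = m_{G'}(G)$ for all pairs $G, G' \in V(H)$. Let $\nu_0$ denote the uniform distribution on $V(H)$. Then
\[
\sum_{G' \in V(H)} \nu_0(G')\, m_{G'}(G) \;=\; \nu_0(G)\sum_{G'} m_{G'}(G) \;=\; \nu_0(G)\sum_{G'} m_G(G') \;=\; \nu_0(G),
\]
using that $\nu_0$ is constant, the symmetry just established, and that $m_G$ is a probability distribution. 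So $\nu_0$ is invariant for the random walk.

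Finally, I would verify that the walk is ergodic, so that its invariant distribution is unique and must coincide with $\nu_0$. Aperiodicity is immediate from the self-loop $m_G(G) = 1/(M(\binom{n}{2}-M)+1) > 0$. Irreducibility amounts to showing that $H$ is connected: any two $M$-edge graphs on $[n]$ can be transformed into one another by a sequence of single edge-swap moves, which one can prove by induction on the size of the symmetric difference of their edge sets (if the symmetric difference is nonempty, pick $e_1 \in E(G_1)\setminus E(G_2)$ and $e_2 \in E(G_2)\setminus E(G_1)$ and swap them in $G_1$, strictly reducing the symmetric difference). There is no genuine obstacle in the argument; the mildest subtlety is just this connectedness verification. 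Combining ergodicity with invariance of $\nu_0$ yields $\nu = \nu_0$, so that $\nu(G) = 1/\binom{\binom{n}{2}}{M}$ for every $G$, as claimed.
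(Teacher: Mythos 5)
Your proof is correct and follows essentially the same route as the paper: both exploit the symmetry $m_{G'}(G)=m_G(G')$ together with regularity of $H$ to show the uniform measure is stationary, then invoke uniqueness of the invariant distribution. You are somewhat more careful than the paper about ergodicity (explicitly noting aperiodicity from the self-loop and sketching connectedness via the symmetric-difference induction), which the paper dispatches with a one-line remark.
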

\begin{proof} Observe that $H$ is not bipartite thus the random walk is ergodic.
It suffices to show that $\nu'(G) = \binom{\binom{n}{2}}{M}^{-1}$ for every $G$ is an invariant distribution for the random walk. Indeed, for every fixed $G \in V(H)$,
\begin{align*}
    \dss_{G'\in H} \nu'(G') m_{G'}(G) 
                &= \binom{\binom{n}{2}}{M}^{-1} \dss_{G' \in N(G)} m_{G'}(G)\\
                &= \binom{\binom{n}{2}}{M}^{-1} \dss_{G'\in N(G)} m_{G}(G')\\
                &= \binom{\binom{n}{2}}{M}^{-1}\\
                &= \nu'(G).
\end{align*}
Since $\nu$ is the unique invariant distribution, it follows then that $\nu= \nu'$. 
\end{proof}

\begin{lemma}
Let $H$ and the random walk $m$ be defined as above. Then $$\kappa(G_1, G_2) \geq \frac{\binom{n}{2}}{M\paren{\binom{n}{2}-M}+1}$$ for all $G_1, G_2 \in H$.
\begin{proof}
By Lemma \ref{lem:kappa_neighbor}, we can assume that $G_1, G_2$ are neighbors in $H$. It then follows from definition that
\[\kappa(G_1, G_2) = 1- W(m_{G_1},m_{G_2}).\] 

Suppose $e_1, e_2$ are the unique vertex pairs with $e_1 \in E(G_1), e_2 \notin E(G_1)$ such that $G_2 = G_1 - e_1 + e_2$. 
Consider the support of $m_{G_1}$, i.e., $N(G_1)$. For each $G_1' \in N(G_1)$, we will match $G_1'$ with a distinct graph $\phi(G_1') \in N(G_2)$. First, let $\phi(G_1) = G_1$ and $\phi(G_2) = G_2$.
For other neighbors $G_1' \in N(G_1)$, there are three types:
\begin{description}
\item Type $1$: $G_1 - e_1 = G_1' -e_3$ for some $e_3\neq e_2$. Then it follows that $G_1'-e_3 = G_2 -e_2$ and we let $\phi(G_1') = G_1'$.
\item Type $2$: $G_1 - e_3 = G_1' - e_2$ for some $e_3\neq e_1$. Then it follows that $G_1'-e_1 = G_2 - e_3$ and we let $\phi(G_1') = G_1'$.
\item Type $3$: $G_1 -e_3 = G_1'- e_4$ for some $e_3, e_4 \notin \{e_1, e_2\}$. In this case, we claim that there exists a unique $G_2' = \phi(G_1') \in N(G_2)$ such that $G_1'-e_1 = G_2'-e_2$.
Indeed, $G_2' = G_2 -e_3 + e_4$ will satisfy the aforementioned property.
\end{description}

Let us now define a coupling $A$ (not necessarily optimal) between $m_{G_1}$ and $m_{G_2}$. Define $A: V(H) \times V(H) \to \mathbb{R}$ as follows:
\begin{equation}
    A(G_1', G_2') = \begin{cases}
      \frac{1}{M \paren{\binom{n}{2}-M}+1} & \textrm{ if } G_1' \in N(G_1) \textrm{ and $G_2' = \phi(G_1')$,}\\
        0    &\textrm{ otherwise. }
    \end{cases}
\end{equation}

Let us verify that $A$ is a coupling of $m_{G_1}$ and $m_{G_2}$. Indeed, for each fixed $G_1'$, if $G_1'= G_1$, then $\sum_{G_2'} A(G_1', G_2') =A(G_1, G_1) = m_{G_1}(G_1)$; if $G_1'\neq G_1$, then $\sum_{G_2'} A(G_1', G_2') = A(G_1',\phi(G_1')) = m_{G_1}(G_1')$. Similarly, $\sum_{G_1'}A(G_1',G_2') = m_{G_2}(G_2')$.
Now by definition,
\begin{align*}
    W(m_{G_1},m_{G_2}) &\leq \dss_{G_1',G_2'} A(G_1', G_2') d(G_1', G_2') \\
    &\leq \dss_{G_1' \in N(G_1)}  A(G_1', \phi(G_1')) d(G_1', \phi(G_1'))\\
    &= \dss_{\substack{G_1' \in N(G_1)\\G_1' \textrm{is Type 3}}} A(G_1', \phi(G_1')) \\
    &\leq \paren{(M-1) \paren{\binom{n}{2}-M-1}} \cdot \frac{1}{M \paren{\binom{n}{2}-M}+1}.
\end{align*}
It follows that 
\begin{align*}
    \kappa(G_1, G_2) &= 1- W(m_{G_1},m_{G_2})\\
                     &\geq \frac{\binom{n}{2}}{M \paren{\binom{n}{2}-M}+1}.
\end{align*}
\end{proof}
\end{lemma}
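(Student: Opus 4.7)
The plan is to invoke Lemma \ref{lem:kappa_neighbor} to reduce to the case where $G_1$ and $G_2$ are neighbors in $H$, so there is a unique pair with $e_1 \in E(G_1) \setminus E(G_2)$ and $e_2 \in E(G_2) \setminus E(G_1)$ and $G_2 = G_1 - e_1 + e_2$. Since $m_{G_1}$ and $m_{G_2}$ assign the common uniform weight $\frac{1}{M(\binom{n}{2}-M)+1}$ to each point of their (closed) neighborhoods, it is enough to exhibit a bijection $\phi$ between the supports so that $d(G_1', \phi(G_1')) = 0$ except on a small exceptional set; the explicit coupling $A(G_1', G_2') = m_{G_1}(G_1')$ on the graph of $\phi$ then automatically has the right marginals.

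I would classify each neighbor of $G_1$ by the unique edge-swap $(e_3, e_4)$ that produces it. If $e_3 = e_1$ and $e_4 \neq e_2$, then $G_1' = G_1 - e_1 + e_4$ also equals $G_2 - e_2 + e_4$, hence lies in $N_H(G_2)$, and one sets $\phi(G_1') = G_1'$; the symmetric case $e_3 \neq e_1$, $e_4 = e_2$ is handled identically. In the remaining ``generic'' case, where $e_3 \notin \{e_1\}$ and $e_4 \notin \{e_2\}$, define $\phi(G_1') := G_2 - e_3 + e_4 \in N_H(G_2)$; here $G_1'$ and $\phi(G_1')$ differ only by swapping $e_1 \leftrightarrow e_2$, so they are adjacent in $H$ and $d(G_1', \phi(G_1')) = 1$. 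Finally, set $\phi(G_1) = G_1$ and $\phi(G_2) = G_2$, which is valid because $G_1 \in N_H(G_2)$ and each vertex lies in its own closed neighborhood.

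Counting the generic neighbors gives exactly $(M-1)(\binom{n}{2}-M-1)$ pairs, each contributing one unit of transportation cost. Therefore
\[
W(m_{G_1}, m_{G_2}) \;\leq\; \frac{(M-1)(\binom{n}{2}-M-1)}{M(\binom{n}{2}-M)+1},
\]
and the identity $M(\binom{n}{2}-M)+1 - (M-1)(\binom{n}{2}-M-1) = \binom{n}{2}$ (a two-line expansion) yields the claimed bound $\kappa(G_1, G_2) \geq \binom{n}{2}/(M(\binom{n}{2}-M)+1)$.

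The main obstacle is the bookkeeping for $\phi$: one must verify that the three case-maps together with the two fixed points really form a bijection between the supports rather than merely an injection. Distinct sources produce distinct images because, relative to either $G_1$ or $G_2$, the pair $(e_3, e_4)$ uniquely determines the swapped graph; surjectivity then follows from a dimension count, since $|N_H(G_1) \cup \{G_1\}| = |N_H(G_2) \cup \{G_2\}| = M(\binom{n}{2}-M)+1$. Once this is in hand the rest is routine arithmetic.
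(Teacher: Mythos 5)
Your proposal is correct and follows essentially the same route as the paper: reduce to adjacent $G_1,G_2$ via Lemma \ref{lem:kappa_neighbor}, build the same matching $\phi$ (your three swap cases correspond exactly to the paper's Types 1--3 plus the fixed points $G_1,G_2$), use the uniform coupling supported on the graph of $\phi$, and bound the cost by the $(M-1)\paren{\binom{n}{2}-M-1}$ generic neighbors at distance $1$. Your extra remarks on injectivity/surjectivity of $\phi$ are a welcome bit of care that the paper glosses over, but they do not change the argument.
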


Let $G(n,M)$ be an  Erd\H{o}s-R\'enyi random graph with $M$ edges. Let $F$ be a fixed graph and $X_{F}$ be the number of copies of $F$ in the random graph $G(n,M)$.  Denote the number of vertices and edges of $F$ by $v(F)$ and $e(F)$ respectively. Let $p = M/\binom{n}{2}$ and \textrm{Aut($F$)} denote the set of automorphisms of $F$. Then 
$$E[X_F] = (1+o(1))\frac{v(F)!}{|\textrm{Aut($F$)}|} \binom{n}{v(F)} p^{e(F)} = \Theta \lp n^{v(F)} p^{e(F)}\rp.$$
For a series of results on the upper tail of $X_F$ using different techniques, we refer the readers to the survey \cite{JR} and the paper \cite{JOR, Chatterjee, DK,DK2, AW}. For $G(n,M)$ in particular, Janson, Oleszkiewicz, Ruci\'nski \cite{JOR} showed the following theorem:

\begin{theorem}\cite{JOR}\label{thm:JOR-Gnm}
For every graph $F$ and for every $t> 1$, there exist constants $c(t,F) >0$ such that for all $n\geq v(F)$ and $e(F)\leq M\leq\binom{n}{2}$, with p:= $M/\binom{n}{2}$, 
$$\Prob{X_F \geq tE[X_F]} \leq \exp{\paren{-c(t,F) M^*_F(n,p)}},$$
where $M^*_F(n,p) \leq n^2p = O(M), M^*_{C_k}(n,p) = \Theta(n^2p^2)$ and $M^*_{K_k}(n,p) = \Theta(n^2 p^{k-1})$.
\end{theorem}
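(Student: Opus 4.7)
The plan is to apply Theorem \ref{thm:main_tool} to the geometrization $(H,m)$ of $G(n,M)$ constructed in Subsection \ref{sec: randomMedges}, where we have already established the Ricci curvature lower bound
\[
\kappa \geq \frac{\binom{n}{2}}{M\paren{\binom{n}{2}-M}+1},
\]
which is of order $\frac{1}{M(1-p)}$. The first task is to compute a Lipschitz constant $L_F$ for $X_F$ on $H$. Since adjacent vertices of $H$ differ by a single edge swap $e_1 \leftrightarrow e_2$, the change in $X_F$ is the signed difference between the number of copies of $F$ through $e_1$ and through $e_2$ in the underlying graph, so $L_F$ is at most the maximum number of copies of $F$ that a prescribed edge can participate in, which is $O(n^{v(F)-2})$ in the worst case.

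Rescaling $X_F/L_F$ to a $1$-Lipschitz function and invoking Theorem \ref{thm:main_tool} then yields
\[
\Prob{X_F - E[X_F] \geq s} \leq \exp\paren{-\frac{s^2 \kappa}{7L_F^2}}.
\]
Setting $s = (t-1)E[X_F]$ with $E[X_F] = \Theta(n^{v(F)} p^{e(F)})$ and using $\kappa = \Theta(1/(n^2 p))$, this would produce an upper-tail exponent of order $n^2 p^{2e(F)-1}$.

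The main obstacle is that this naive bound is \emph{weaker} than the JOR bound $M^*_F(n,p)$ whenever $p$ is small: for $F = K_k$ we obtain $n^2 p^{2e(F)-1} = n^2 p^{k(k-1)-1}$ against the optimal $n^2 p^{k-1}$, and an analogous loss occurs for cycles. The discrepancy stems from the fact that the worst-case Lipschitz constant $\Theta(n^{v(F)-2})$ is much larger than the \emph{typical} number of copies of $F$ through an edge, which is $\Theta(n^{v(F)-2} p^{e(F)-1})$. To close the gap one would adopt a truncation strategy: restrict to the high-probability event $\mathcal{E}$ that no edge of the random graph lies in more than $C \cdot n^{v(F)-2} p^{e(F)-1}$ copies of $F$, replace $X_F$ by a modified function $\tilde{X}_F$ that agrees with $X_F$ on $\mathcal{E}$ and has the smaller Lipschitz constant globally on $V(H)$, apply Theorem \ref{thm:main_tool} to $\tilde{X}_F$, and then absorb the tail of $\mathcal{E}^c$ by a direct first-moment estimate.

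The delicate step is the construction of $\tilde{X}_F$: it must remain Lipschitz across the entire state space $V(H)$ (including configurations far outside $\mathcal{E}$), and the choice of truncation threshold must be tight enough to recover the correct exponent $M^*_F(n,p)$ yet loose enough that $\Prob{\mathcal{E}^c}$ is dominated by the target bound. It is precisely at this juncture that the specific combinatorial inputs of \cite{JOR} — controlling the joint distribution of local copy-counts for $F = K_k$ and $F = C_k$ — become indispensable, and why a purely curvature-based proof is unlikely to match the JOR exponent in full generality without borrowing those ingredients.
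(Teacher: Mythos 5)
This statement is not proved in the paper at all: it is quoted verbatim from Janson, Oleszkiewicz and Ruci\'nski \cite{JOR} and used only as a benchmark against which the curvature method is compared. So the relevant question is whether your argument actually establishes the stated bound, and it does not. The first half of your proposal reproduces exactly the computation the paper itself carries out in inequality \eqref{eq:subgraph_curv}: Lipschitz constant $\Theta(n^{v(F)-2})$, curvature $\kappa=\Theta(1/(n^2p))$, resulting exponent $\Theta\paren{n^2p^{2e(F)-1}}$. The paper is explicit that this is \emph{weaker} than $M^*_F(n,p)$ for every range of $p$ with $n^2p\to\infty$ except $p=\Theta(1)$, so this portion of your argument proves a different (and for most $p$ strictly worse) inequality than the one in the statement.

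The second half --- the truncation strategy --- is where the proof would have to live, and it is left entirely unexecuted. You do not construct $\tilde{X}_F$, do not verify that a function agreeing with $X_F$ on $\mathcal{E}$ can be extended to all of $V(H)$ with the smaller Lipschitz constant, and do not bound $\Prob{\mathcal{E}^c}$; you explicitly concede that closing these gaps requires importing the combinatorial machinery of \cite{JOR}, which is the result you are trying to prove. There is also a quantitative mismatch in the plan's endpoint: with the typical per-edge count $L_F=\Theta\paren{n^{v(F)-2}p^{e(F)-1}}$ and $\kappa=\Theta(1/(n^2p))$, the exponent $\frac{s^2\kappa}{L_F^2}$ with $s=(t-1)E[X_F]$ comes out to $\Theta(n^2p)$, which does not equal $M^*_{K_k}(n,p)=\Theta(n^2p^{k-1})$ or $M^*_{C_k}(n,p)=\Theta(n^2p^2)$, so even a successful truncation along these lines would not deliver the theorem as stated. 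The honest conclusion is that the curvature framework of this paper does not prove Theorem \ref{thm:JOR-Gnm}; one must cite \cite{JOR} for it.
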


Let us now apply Theorem \ref{thm:main_tool} to obtain the concentration results from the perspective of the Ricci curvature. Recall that $H$ is defined as the graph such that $V(H)$ is the set of all labeled graphs with $n$ vertices and $M$ edges. Moreover, two graphs $G_1, G_2 \in V(H)$ are adjacent in $H$ if and only if there exist two distinct vertex pairs $e_1, e_2$ such that $e_1 \in E(G_1)\backslash E(G_2)$, $e_2 \in E(G_2)\backslash E(G_1)$ such that $G_1 - e_1  = G_2 - e_2$.

Again let $X_F$ be the random variable denoting the number of copies of $F$ in $G(n,M)$. For ease of reference, let $k = v(F)$.
Observe that $X_{F}$ is $\binom{n}{k-2}$-Lipschitz on $H$, i.e., if $G_1, G_2$ are adjacent in $H$, then $|X_F(G_1)-X_F(G_2)| \leq \binom{n}{k-2}$. Thus by Theorem \ref{thm:main_tool}, 

$$ \Prob{\frac{X_{F}}{\binom{n}{k-2}} > \frac{E[X_{F}]}{\binom{n}{k-2}} + \frac{t}{\binom{n}{k-2}}} \leq \exp{\lp -\frac{t^2\kappa}{5\binom{n}{k-2}^2}\rp}.$$ 
It follows that 
$$ \Prob{X_{F} > E[X_{F}] + t} \leq \exp{\lp -\frac{t^2\kappa}{5\binom{n}{k-2}^2}\rp}.$$ 
Let $p = M/\binom{n}{2}$. We then obtain that
\begin{equation}\label{eq:subgraph_curv}
    \Prob{X_F \geq tE[X_F]} 
                \leq \exp{\lp -\frac{\paren{(t-1)E[X_F]}^2\kappa}{5\binom{n}{k-2}^2}\rp}
                \leq \exp \paren{-C_k(t-1)^2 n^2 p^{2e(F)-1}}.      
\end{equation}
Note that when $p = \Theta(1)$, i.e., $M = \Theta \paren{ \binom{n}{2}}$, the concentration inequalities obtained from Theorem \ref{thm:main_tool} has the same asymptotic exponent as Theorem \ref{thm:JOR-Gnm}.
For other ranges of $p$ with $n^2 p \to \infty$, the asymptotic exponent in \eqref{eq:subgraph_curv} is worse than the bound in Theorem \ref{thm:JOR-Gnm}. Nonetheless, let us compare the bounds obtained from the Ricci curvature method with those obtained from other concentration inequalities. Janson and Ruci\'nski \cite{JR} surveyed the existing techniques on estimating the exponents for upper tails in the small subgraphs problem in $G(n,p)$ (ignoring logarithmic factors). Please see Figure \ref{f:subgraph_prob} for the summary.
\begin{figure}[htb]
	\begin{center}
    \includegraphics[width=\textwidth]{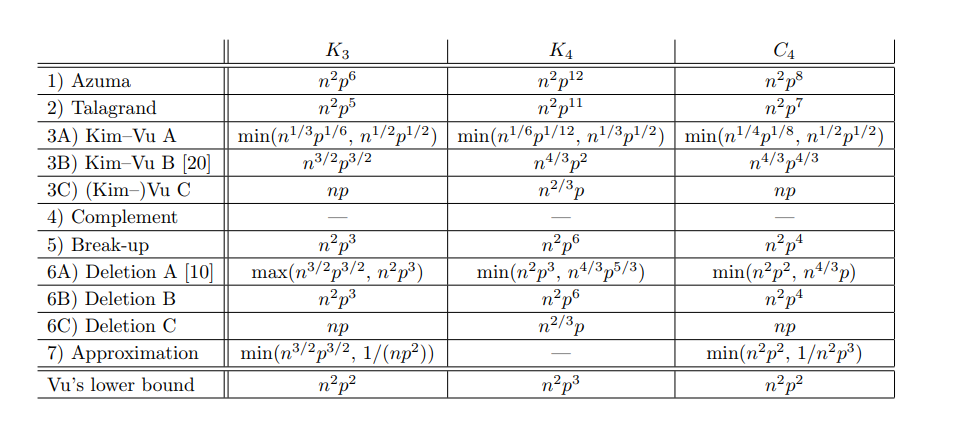}
    \caption{\cite{JR} Exponents for upper tails in the small subgraphs problem}
		\label{f:subgraph_prob}
    \end{center}
\end{figure}

Although we are mainly dealing with $G(n,M)$ in this section, it is well known that $G(n,M)$ and $G(n,p)$ with $p = M/\binom{n}{2}$ behaves similarly when $n^2p \to \infty$. 
Applying the inequalities in (\ref{eq:subgraph_curv}) to $K_3, K_4, C_4$ respectively, we have that the exponents (ignoring constant) obtained from the Ricci curvature method are $n^2 p^{5}$, $n^2p^{11}$ and $n^2p^7$ respectively. In this context, the concentration we obtained from Theorem \ref{thm:main_tool} has the same strength as Talagrand inequality and slightly stronger than Azuma's inequality.




\subsection{Edge-Lipschitz functions on random hypergraphs}
Let $\cH \sim \cH^k(n,M)$ be a random $k$-uniform hypergraph with $n$ vertices and $M$ edges. Let $H$ be a graph such that $V(H)$ is the set of all labeled $k$-uniform hypergraphs with $n$ vertices and $M$ edges. Moreover, two hypergraphs $\cH_1, \cH_2 \in V(H)$ are adjacent in $H$ if and only if there exist two distinct $k$-sets $h_1, h_2$ such that $h_1 \in E(\cH_1)\backslash E(\cH_2)$, $h_2 \in E(\cH_2)\backslash E(\cH_1)$ and $\cH_1 - h_1  = \cH_2 - h_2$. In other words, $\cH_1, \cH_2$ are adjacent in $H$ if one can be obtained from the other by swapping a hyperedge with a non-hyperedge. It is easy to see that $H$ is a connected regular graph. Moreover, for every $\cH \in V(H)$, $d_H(\cH) = M \paren{\binom{n}{k}-M}$.
Now define a random walk $m$ on $H$ as follows:
Let $\cH \in V(H)$. Define 
\[m_{\cH}(\cH') = \begin{cases}
 \frac{1}{M \paren{\binom{n}{k}-M}+1}  & \textrm{ if $\cH' \in \Gamma(\cH)$,}  \\
    0      &  \textrm{ otherwise.}
\end{cases}\]
By the same logic in Section \ref{sec: randomMedges}, we can obtain a lower bound for the Ricci curvature of $H$, i.e., for all $\cH_1, \cH_2 \in V(H)$,
$$\kappa(\cH_1, \cH_2) \geq \frac{\binom{n}{k}}{M\paren{\binom{n}{k}-M}+1}.$$

Similar to before, we can also apply Theorem \ref{thm:main_tool} to obtain concentration results for the number of copies of fixed sub-hypergraphs in a uniformly random hypergraph on $n$ vertices and $M$ edges. The idea is similar to Section \ref{sec: randomMedges} and we leave the details to the readers.

\subsection{Vertex-Lipschitz functions on random \texorpdfstring{$d$}{d}-out(in)-regular graphs}
Given a directed graph $G$ and a vertex $v$, we use $\delta^{+}(v)$ and $\delta^{-}(v)$ to denote the outdegree and indegree, respectively, of a vertex $v$.
A \textit{$d$-out-regular graph} $G$ is a directed graph in which $\delta^{+}(v) = d$ for every $v\in V(G)$. Similarly, a \textit{$d$-in-regular graph} $G$ is a directed graph in which $\delta^{-}(v) = d$ for every $v\in V(G)$. Moreover, let $\Gamma^+(v)= \{u \in V(G): vu \in E(G)\}$, $\Gamma^{-}(v)= \{u \in V(G): uv \in E(G)\}$, $N^+(v) = \Gamma^+(v) \cup \{v\}$ and $N^-(v) = \Gamma^-(v) \cup \{v\}$.

Let $H$ be a graph such that $V(H)$ is the set of all labeled $d$-out-regular graphs on $n$ vertices. Two graphs $G_1, G_2 \in V(H)$ are adjacent in $H$ if and only if there exists some vertex $v \in V(G_1) = V(G_2)$ such that one can be obtained from the other by changing $\Gamma^+(v)$. It is not hard to see that $H$ is a connected graph with $diam(H) \leq n$. Moreover, it is also clear that if $G_1, G_2$ are adjacent in $H$, there is a unique vertex $v$ such that one can be obtained from the other by changing $\Gamma^+(v)$.

Now define a random walk $m$ on $H$ as follows: let $G \in V(H)$ and define

\[m_G(G') = \begin{cases}
 \frac{1}{n \paren{\binom{n-1}{d}-1}+1}  & \textrm{ if $G'\in N^+(G)$,} \\
    0      &  \textrm{ otherwise}.
\end{cases}\]
It's easy to see that for a fixed $G$, $\sum_{G'} m_G(G') = 1$.

\begin{proposition}
Let $\nu$ be the unique invariant distribution of the random walk defined above. A random graph $G$ picked according to $\nu$, is equally likely to be one of the $d$-out-regular graphs on $n$ vertices.
\end{proposition}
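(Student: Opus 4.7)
The plan is to mirror the argument used for $G(n,M)$ in Section \ref{sec: randomMedges}: write down the candidate invariant distribution $\nu'(G) = 1/N$, where $N = \binom{n-1}{d}^n$ is the number of labeled $d$-out-regular graphs on $n$ vertices, and verify the invariance equation $\sum_{G'} \nu'(G')\,m_{G'}(G) = \nu'(G)$ directly. Since the random walk is stated to be ergodic, invariance forces $\nu = \nu'$.

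The verification hinges on two structural facts about $H$ and $m$. First, $H$ is regular of degree $n\paren{\binom{n-1}{d}-1}$: from any $G$ one obtains a neighbor by choosing a vertex $v$ (in $n$ ways) and replacing $\Gamma^+(v)$ with any of the $\binom{n-1}{d}-1$ alternative $d$-subsets of $V(G)\setminus\{v\}$. Second, the adjacency relation on $H$ is visibly symmetric, and the transition weights satisfy $m_G(G') = m_{G'}(G) = \frac{1}{n(\binom{n-1}{d}-1)+1}$ on every adjacent pair as well as on the self-loop at each vertex. Using $|N^+(G)| = n(\binom{n-1}{d}-1)+1$, the invariance sum collapses:
\[
\sum_{G'} \nu'(G')\, m_{G'}(G) \;=\; \sum_{G' \in N^+(G)} \frac{1}{N}\cdot \frac{1}{n(\binom{n-1}{d}-1)+1} \;=\; \frac{1}{N} \;=\; \nu'(G).
\]

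For completeness I would also check ergodicity explicitly. Aperiodicity is immediate since $m_G(G)>0$ gives a self-loop at every vertex. Connectedness of $H$ follows from the observation that any two $d$-out-regular graphs $G_1, G_2$ on a common vertex set $V=\{v_1,\ldots,v_n\}$ are joined by a walk of length at most $n$ in $H$: at step $i$, replace $\Gamma_{G_1}^+(v_i)$ by $\Gamma_{G_2}^+(v_i)$, and each intermediate configuration is still $d$-out-regular because out-neighborhoods of distinct vertices are edited independently. A finite, connected, aperiodic Markov chain has a unique invariant distribution, so $\nu = \nu'$. The $d$-in-regular case is entirely analogous. There is no substantial obstacle beyond careful bookkeeping; the one point to watch is to include $G$ itself when counting $N^+(G)$, so that the cardinality matches the denominator $n(\binom{n-1}{d}-1)+1$ in the definition of $m_G$.
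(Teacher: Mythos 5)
Your proposal is correct and follows essentially the same route as the paper: exhibit the uniform distribution $\nu'(G)=\binom{n-1}{d}^{-n}$ as invariant by exploiting the symmetry $m_{G'}(G)=m_G(G')$ (equivalently, double stochasticity of the transition matrix) and then invoke uniqueness of the invariant distribution for the ergodic chain. The only difference is cosmetic: you make the degree count $n\paren{\binom{n-1}{d}-1}$ and the ergodicity check (self-loops plus the vertex-by-vertex out-neighborhood swap for connectivity) explicit, where the paper compresses these into the observation that $\sum_{G'} m_{G'}(G)=\sum_{G'} m_G(G')=1$ and that $H$ is non-bipartite.
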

\begin{proof}
Observe that $H$ is not bipartite thus the random walk is ergodic.
There are $\binom{n-1}{d}^n$ many $d$-out-regular graphs in total. Hence, it suffices to show that $\nu'(G) = \binom{n-1}{d}^{-n}$ for every $G$ is an invariant distribution for the random walk. Indeed, for every fixed $G \in V(H)$,
\begin{align*}
    \dss_{G'\in H} \nu'(G') m_{G'}(G) &= \binom{n-1}{d}^{-n} \dss_{G' \in H} m_{G'}(G)\\
                &= \binom{n-1}{d}^{-n} \dss_{G' \in H} m_{G}(G')\\
                &= \binom{n-1}{d}^{-n}\\
                &= \nu'(G).
\end{align*}
Since $\nu$ is the unique invariant distribution, it follows then that $\nu= \nu'$. 
\end{proof}

\begin{lemma}
Let $H$ and the random walk $m$ be defined as above. Then $$\kappa(G_1, G_2) \geq \frac{1}{n}$$ for all $G_1, G_2 \in V(H)$.
\begin{proof}
Again, by Lemma \ref{lem:kappa_neighbor}, we can assume that $G_1, G_2$ are neighbors in $H$. It then follows from definition that
\[\kappa(G_1, G_2) = 1- W(m_{G_1},m_{G_2}).\] 

Suppose $v$ is the unique vertex such that $G_2$ can be obtained from $G_1$ by changing $\Gamma^+(v)$.
Consider the support of $m_{G_1}$. For each $G_1' \in N(G_1)$, we will match $G_1'$ with a distinct graph $\phi(G_1') \in N(G_2)$. Again, let $\phi(G_1) = G_1$ and $\phi(G_2) = G_2$. For other neighbors $G_1'$ of $G_1$, there are two possible cases:
\begin{description}
\item Case $1$: $G_1 - v = G_1' -v$. Then it follows that $G_1'-v = G_2 -v$ and we let $\phi(G_1') = G_1'$.
\item Case $2$: $G_1 -u = G_1'- u$ for some $u\neq v$. In this case, we claim that for each $G_1'$ such that $G_1 -u = G_1' -u$, there exists a unique $G_2' = \phi(G_1')$ such that $G_2' - u = G_2 - u$ and $G_1' - v = G_2' - v$.
Indeed, let $G_2'$ be obtained from $G_2$ by replacing the out-neighbors of $u$ in $G_2$ by the out-neighbors of $u$ in $G_1'$. It's not hard to see that $G_2'-u = G_2 -u$ and $G_1' - v = G_2'-v$.
\end{description}

Let us now define a coupling $A$ (not necessarily optimal) between $m_{G_1}$ and $m_{G_2}$. Define $A: V(H) \times V(H) \to \mathbb{R}$ as follows:
\begin{equation}
    A(G_1', G_2') = \begin{cases}
      \frac{1}{n \paren{\binom{n-1}{d}-1}+1} & \textrm{ if } G_1' \in N(G_1)\textrm{ and $G_2' = \phi(G_1')$,}\\
        0    &\textrm{ otherwise. }
    \end{cases}
\end{equation}

It is not hard to verify that $A$ is a coupling of $m_{G_1}$ and $m_{G_2}$. 
Now by definition,
\begin{align*}
    W(m_{G_1},m_{G_2}) &\leq \dss_{G_1',G_2'} A(G_1', G_2') d(G_1', G_2') \\
    &\leq \dss_{u\neq v}\dss_{\substack{G_1' \in N(G_1)\\G_1'-u = G_1-u}} A(G_1', \phi(G_1')) d(G_1', \phi(G_1'))\\
    &\leq (n-1) \paren{\binom{n-1}{d}-1} \frac{1}{n \paren{\binom{n-1}{d}-1}+1}
\end{align*}
It follows that 
    \[\kappa(G_1, G_2) = 1- W(m_{G_1},m_{G_2}) \geq \frac{\binom{n-1}{d}}{n\lp  \binom{n-1}{d}-1\rp+1} \geq \frac{1}{n}.\]
This completes the proof of the lemma.
\end{proof}
\end{lemma}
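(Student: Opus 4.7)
The plan is to mirror the coupling construction used for $G(n,M)$ in the previous subsection. By Lemma \ref{lem:kappa_neighbor}, it suffices to prove $\kappa(G_1,G_2)\geq \frac{1}{n}$ when $G_1G_2\in E(H)$. In that case, by the uniqueness observation just above, there is a unique vertex $v$ such that $G_1-v = G_2-v$ but $\Gamma^+_{G_1}(v)\neq \Gamma^+_{G_2}(v)$. As always, $\kappa(G_1,G_2)=1-W(m_{G_1},m_{G_2})$, so everything comes down to exhibiting a coupling $A$ of $m_{G_1}$ and $m_{G_2}$ with small transportation cost.

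I would build $A$ from an explicit bijection $\phi\colon N^+(G_1)\to N^+(G_2)$ that places mass $\tfrac{1}{n(\binom{n-1}{d}-1)+1}$ on each pair $(G_1',\phi(G_1'))$. The bijection is chosen so that $d(G_1',\phi(G_1'))$ is as small as possible. First, put $\phi(G_1)=G_1$ and $\phi(G_2)=G_2$; both are legal since $G_1,G_2$ are neighbors in $H$. Next, for each neighbor $G_1'$ of $G_1$ obtained by altering only $\Gamma^+(v)$ (other than $G_1,G_2$), we have $G_1'-v = G_2-v$, so $G_1'\in N^+(G_2)$, and I set $\phi(G_1')=G_1'$; each such pair contributes cost $0$. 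The remaining neighbors of $G_1$ are obtained by altering $\Gamma^+(u)$ for some $u\neq v$; for each such $G_1'$, I define $\phi(G_1')$ to be the graph $G_2'$ obtained from $G_2$ by replacing $\Gamma^+_{G_2}(u)$ with $\Gamma^+_{G_1'}(u)$. Then $G_2'-u=G_2-u$ so $G_2'\in N^+(G_2)$, while $G_2'$ and $G_1'$ differ only at $v$, hence $d(G_1',G_2')=1$.

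After verifying that $\phi$ is indeed a bijection onto $N^+(G_2)$ (the construction is symmetric in $G_1,G_2$, and the preimage of any neighbor of $G_2$ can be read off by the same rule), the cost of the coupling is obtained by a clean count: there are exactly $(n-1)(\binom{n-1}{d}-1)$ ``Case 2'' pairs, each of distance $1$; all other pairs have distance $0$. Therefore
\[
W(m_{G_1},m_{G_2})\;\leq\;\frac{(n-1)\paren{\binom{n-1}{d}-1}}{n\paren{\binom{n-1}{d}-1}+1},
\]
which gives $\kappa(G_1,G_2)\geq \frac{\binom{n-1}{d}}{n(\binom{n-1}{d}-1)+1}\geq \frac{1}{n}$, the last inequality being equivalent to $n\geq 1$.

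The main obstacle is purely bookkeeping: one must confirm that the Case-2 rule really produces distinct images in $N^+(G_2)$ and that, together with the Case-1 identifications, it accounts for every element of $N^+(G_2)$ exactly once. This amounts to noting that a neighbor of any $d$-out-regular graph is determined by the pair (vertex altered, new out-neighborhood at that vertex), so the natural product structure of $N^+(G_1)$ and $N^+(G_2)$ — both indexed by pairs $(u,S)$ with $u\in V$ and $S\in \binom{V\setminus\{u\}}{d}$ — lets $\phi$ act as the identity on the $u$-coordinate and rearrange only the $v$-coordinate in the way dictated above. Once this verification is in hand, the rest is the same direct computation as in the $G(n,M)$ proof.
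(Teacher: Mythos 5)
Your proposal is correct and follows essentially the same route as the paper: the same reduction to adjacent $G_1,G_2$ via Lemma \ref{lem:kappa_neighbor}, the same bijection $\phi$ (identity on neighbors altering $\Gamma^+(v)$, out-neighborhood transfer at $u\neq v$ otherwise), the same uniform coupling, and the same count of $(n-1)\paren{\binom{n-1}{d}-1}$ distance-one pairs yielding $\kappa(G_1,G_2)\geq \binom{n-1}{d}/\paren{n\paren{\binom{n-1}{d}-1}+1}\geq 1/n$. Your explicit remark that $N^+(G_1)$ and $N^+(G_2)$ share the product indexing by pairs $(u,S)$ is a nice way to justify the bijectivity that the paper leaves implicit.
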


Let $G$ be a uniformly random $d$-out-regular graph. A \textit{directed triangle} is a cycle of length $3$ with vertices $u,v,w$ such that $uv, vw$ and $wu$ are all directed edges. Let $X_{n,d}:= X(G)$ be the random variable denoting the number of directed triangle in $G$. It is not hard to see that 

$$E[X_{n,d}] \approx 2\binom{n}{3} \paren{\frac{d}{n-1}}^3.$$
We will now use Theorem \ref{thm:main_tool} to derive the concentration behavior of $X_{n,d}$. Note that $X_{n,d}$ is $(d^2)$-Lipschitz. Hence by Theorem \ref{thm:main_tool}, we have that

$$ \Prob{\abs*{\frac{X_{n,d}}{d^2} - \frac{E[X_{n,d}]}{d^2}} > \frac{t}{d^2}} \leq 2\exp{\lp -\frac{t^2\kappa}{5d^4}\rp}.$$ 
It follows that 

$$\Prob{\abs*{X_{n,d} - E[X_{n,d}]} > t} 
         \leq 2\exp{\lp -\frac{t^2\kappa}{5d^4}\rp}
         \leq 2\exp{\lp -\frac{t^2}{5nd^4}\rp}.$$

\subsection{Lipschitz functions on random linear permutations}

We will denote a linear permutation $\sigma$ by $\sigma = [a_1 a_2 \ldots a_n]$ such that $a_i \in [n]$ for all $i$ and $\sigma(i) = a_i$. A linear permutation on $[n]$ can be viewed as a sequence of $n$ distinct numbers from $[n]$. Thus, WLOG, $\{a_1, a_2, \ldots, a_n\} = [n]$.
Given two permutations $\sigma_1, \sigma_2$ where $\sigma_1 = [a_1 a_2 \ldots a_n]$, we say $\sigma_1$ is \textit{$(i,j)$-alike} to $\sigma_2$ if $\sigma_2$ can be obtained from $\sigma_1$ by moving the number $i$ to the position after the number $j$ in $\sigma_1$; moreover, $\sigma_1$ is $(i,0)$-alike to $\sigma_2$ if $\sigma_2$ can be obtained from $\sigma_1$ by moving the number $i$ to the first position of $\sigma_1$. For example, $\sigma_1 = [12345]$ is $(2,4)$-alike to $\sigma_2 = [13425]$ and is $(4,0)$-alike to $\sigma_3 = [41235]$.
Two \textit{distinct} linear permutations $\sigma_1, \sigma_2$ are \textit{insertion-alike} if one is $(i,j)$-alike to the other for some $i\neq j$.

Let $H$ be the graph such that $V(H)$ is the set of all linear permutations of $[n]$ and two linear permutation $\sigma_1, \sigma_2$ are adjacent in $H$ if and only if they are insertion-alike. Clearly $H$ is a connected graph with diameter at most $n$. Moreover, every vertex (which is a linear permutation) in $H$ has $(n-1)^2$ neighbors in $H$.

Now define a random walk $m_{\alpha}$ on $H$ as follows: let $\sigma \in V(H)$ and define
\[m_{\sigma}(\sigma') = \begin{cases}
     \frac{1}{(n-1)^2+1}  & \textrm{ if $\sigma = \sigma'$ or $\sigma$ is insertion-alike to $\sigma'$,}\\
    0   &  \textrm{ otherwise}.
\end{cases}\]
It's not hard to see that for a fixed $\sigma$, $\sum_{\sigma'} m_{\sigma}(\sigma') = 1$. Moreover, $m_{\sigma}(\sigma') = m_{\sigma'}(\sigma)$ for every pair of $\sigma, \sigma'$.

\begin{proposition}
Let $\nu$ be the unique invariant distribution of the random walk defined above. A random permutations $\sigma$ picked according to $\nu$, is equally likely to be one of the $n!$ permutations.
\end{proposition}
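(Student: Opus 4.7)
The proposition asserts that the unique invariant distribution of the random walk $m$ on $H$ is the uniform distribution on $S_n$. The plan is to verify ergodicity so that an invariant distribution exists and is unique, and then to directly check that the uniform measure is invariant by exploiting the symmetry $m_\sigma(\sigma')=m_{\sigma'}(\sigma)$ that the paper already noted just before the statement.

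First I would establish ergodicity. The graph $H$ is connected because any linear permutation can be transformed into any other by a sequence of insertion moves: e.g., one can place the numbers $1,2,\ldots,n$ one by one into their target positions, each step corresponding to an $(i,j)$- or $(i,0)$-alike move. Moreover, the walk is aperiodic because $m_\sigma(\sigma)=1/((n-1)^2+1)>0$ gives a self-loop at every vertex (equivalently, $H$ is not bipartite). Therefore the random walk is ergodic and the invariant distribution is unique.

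Next I would verify that $\nu'(\sigma):=1/n!$ is invariant. Using the symmetry $m_{\sigma'}(\sigma)=m_\sigma(\sigma')$, for every fixed $\sigma\in V(H)$,
\begin{align*}
\sum_{\sigma'\in V(H)} \nu'(\sigma')\, m_{\sigma'}(\sigma)
&= \frac{1}{n!}\sum_{\sigma'\in V(H)} m_{\sigma'}(\sigma)\\
&= \frac{1}{n!}\sum_{\sigma'\in V(H)} m_{\sigma}(\sigma')\\
&= \frac{1}{n!} = \nu'(\sigma),
\end{align*}
where the last equality uses that $m_\sigma$ is a probability distribution. By uniqueness, $\nu=\nu'$.

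There is essentially no obstacle here beyond two routine checks: connectivity of $H$ (which I would dispatch with the explicit insertion sequence above) and the symmetry of $m$ (already recorded in the paragraph preceding the proposition). Everything else is a detailed balance computation identical in form to the analogous arguments carried out earlier in the paper for $G(n,p)$, $G(n,M)$, and the $d$-out-regular model.
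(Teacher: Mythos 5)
Your proposal is correct and follows essentially the same route as the paper: ergodicity from connectivity plus non-bipartiteness (the self-loop), then the invariance of the uniform measure via the symmetry $m_{\sigma'}(\sigma)=m_{\sigma}(\sigma')$ and the fact that $m_\sigma$ sums to one. The only difference is that you spell out the connectivity argument explicitly, which the paper leaves implicit.
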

\begin{proof}
Observe that $H$ is not bipartite thus the random walk is ergodic.
There are $n!$ permutations in total. Hence, it suffices to show that $\nu'(\sigma) = (n!)^{-1}$ for every $\sigma$ is an invariant distribution for the random walk. 
\begin{align*}
    \dss_{\sigma'\in H} \nu'(\sigma') m_{\sigma'}(\sigma) &= \frac{1}{n!} \dss_{\sigma' \in V(H)} m_{\sigma'}(\sigma)\\
                &= \frac{1}{n!} \dss_{\sigma' \in V(H)}     m_{\sigma}(\sigma')\\
                &= \frac{1}{n!}\\
                &= \nu'(\sigma).
\end{align*}
Since $\nu$ is the unique invariant distribution, it follows then that $\nu= \nu'$. 
\end{proof}

\begin{lemma}
Let $H$ and the random walk $m$ be defined as above.
If $\sigma_1, \sigma_2 \in V(H)$ are neighbors in $H$, then $\kappa(\sigma_1, \sigma_2) \geq \frac{1}{n}$.
\end{lemma}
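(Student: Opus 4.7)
The plan is to mirror the structure of the four preceding subsections: for a pair of neighbors $\sigma_1,\sigma_2 \in V(H)$, I will construct a coupling of $m_{\sigma_1}$ and $m_{\sigma_2}$ from an explicit bijection $\phi: N[\sigma_1] \to N[\sigma_2]$ whose paired elements are almost always at distance zero. Let $\mu_{ij}$ denote the insertion operation that takes $\sigma_1$ to $\sigma_2$, so that $\sigma_2 = \mu_{ij}(\sigma_1)$.

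The key observation is that any ``move $i$'' insertion $\nu_{i\ell}$ depends only on the relative order of the elements other than $i$. Since $\sigma_1$ and $\sigma_2$ differ only in the position of $i$, one has $\nu_{i\ell}(\sigma_1) = \nu_{i\ell}(\sigma_2)$ for every admissible target $\ell$. Consequently all $n-1$ non-trivial neighbors of $\sigma_1$ obtained by moving $i$ lie in $N[\sigma_2]$: one of them is $\sigma_2$ (the case $\ell = j$), while the remaining $n-2$ are common neighbors of $\sigma_1$ and $\sigma_2$. Together with $\sigma_1$ itself (which also belongs to $N(\sigma_2)$), this gives $|N[\sigma_1] \cap N[\sigma_2]| \geq n$, and hence
\[
|N[\sigma_1] \setminus N[\sigma_2]| \leq (n-1)^2 + 1 - n = (n-1)(n-2).
\]

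I will then set $\phi$ to be the identity on the intersection $N[\sigma_1] \cap N[\sigma_2]$ (every such pair contributes distance $0$) and to send each remaining $\sigma_1' \in N[\sigma_1] \setminus N[\sigma_2]$ to $\mu_{ij}(\sigma_1')$, which lies within one insertion of $\sigma_1'$. Granting that $\phi$ is a bijection onto $N[\sigma_2]$, the induced coupling $A(\sigma_1',\phi(\sigma_1')) = 1/((n-1)^2+1)$ gives
\[
W(m_{\sigma_1}, m_{\sigma_2}) \leq \frac{(n-1)(n-2)}{(n-1)^2+1},
\]
so that $\kappa(\sigma_1,\sigma_2) = 1 - W \geq \frac{n}{(n-1)^2+1} \geq \frac{1}{n}$ for all $n \geq 1$.

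The main obstacle is verifying that $\mu_{ij}$ restricts to a bijection from $N[\sigma_1] \setminus N[\sigma_2]$ onto $N[\sigma_2] \setminus N[\sigma_1]$. For any $\sigma_1'$ in the domain, the fact that $\sigma_1'$ must come from $\sigma_1$ by a non-$i$ insertion (since all ``move $i$'' neighbors already lie in $N[\sigma_2]$) guarantees that $\mu_{ij}(\sigma_1')$ shares its non-$i$ view with $\sigma_1'$ and has $i$ right after $j$, so it differs from $\sigma_2$ by a single non-$i$ insertion in the non-$i$ view and thus lies in $N(\sigma_2)$; it cannot lie in $N[\sigma_1]$ because no single insertion on $\sigma_1$ can simultaneously relocate $i$ and alter the non-$i$ view. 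The most delicate point will be injectivity: since $\mu_{ij}(\tau)$ depends only on the non-$i$ view of $\tau$, two distinct pre-images must share a non-$i$ view and hence arise from the two move-representations of a single adjacent transposition in the non-$i$ view, differing only in the position of $i$. I expect a short case analysis -- splitting on whether the moved non-$i$ element crosses $i$ -- to show that in each such colliding pair exactly one member is already a neighbor of $\sigma_2$, hence handled by the identity part of $\phi$, leaving its partner as the unique pre-image in the complement. Surjectivity of $\phi$ then follows from the cardinality identity $|N[\sigma_1] \setminus N[\sigma_2]| = |N[\sigma_2] \setminus N[\sigma_1]|$.
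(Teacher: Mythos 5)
Your overall strategy is the same as the paper's: exhibit a bijection $\phi\colon N[\sigma_1]\to N[\sigma_2]$ that fixes at least $n$ common elements and moves everything else by distance at most $1$, then read off $W(m_{\sigma_1},m_{\sigma_2})\le \frac{(n-1)(n-2)}{(n-1)^2+1}$. Your count $|N[\sigma_1]\cap N[\sigma_2]|\ge n$ and the closing arithmetic are correct. The gap is that your specific map --- identity on the intersection, $\mu_{ij}$ on the complement --- is not a bijection, and both of the claims you flag as ``to be verified'' are in fact false. Take $n=4$, $\sigma_1=[1234]$, $i=1$, $j=3$, so $\sigma_2=[2314]$. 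First, the claim that $\mu_{ij}(\sigma_1')\notin N[\sigma_1]$ fails: $\sigma_1'=[1324]$ (move $3$ after $1$) lies in $N(\sigma_1)\setminus N[\sigma_2]$, yet $\mu_{13}([1324])=[3124]$, which is $[1234]$ with $3$ moved to the front and also $[2314]$ with $2$ moved after $1$; thus $[3124]\in N(\sigma_1)\cap N(\sigma_2)$ and $\phi([1324])=[3124]=\phi([3124])$. The error in your reasoning is that a single insertion of an element $i'\neq i$ \emph{can} simultaneously alter the non-$i$ view and relocate $i$, namely whenever $i'$ is moved past $i$. Second, the injectivity repair fails: $[4123]$ and $[1423]$ both lie in $N(\sigma_1)\setminus N[\sigma_2]$ (neither is insertion-alike to $[2314]$), they share the non-$1$ view $[423]$, and both have $\mu_{13}$-image $[4231]$; so it is not true that exactly one member of a colliding pair is already a neighbor of $\sigma_2$. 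Consequently $A$ is not a coupling (its column sum at $[3124]$ and at $[4231]$ is $\tfrac{2}{(n-1)^2+1}$, while $[3214]$ and $[2143]$ receive no mass), and the Wasserstein bound does not follow.

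The lemma itself survives: in this example a distance-$1$ perfect matching between $N[\sigma_1]\setminus N[\sigma_2]$ and $N[\sigma_2]\setminus N[\sigma_1]$ does exist, but it is forced to send $[1324]\mapsto[3214]$ and $[1423]\mapsto[2143]$, neither of which is of the form $\mu_{ij}(\cdot)$. The paper's construction is genuinely different at this point: writing $\sigma_1'$ as $\sigma_1$ with $i'$ moved to after $j'$ (for some $i'\neq i$), it matches $\sigma_1'$ to the permutation obtained from $\sigma_2$ by applying the \emph{same} move $(i',j')$, i.e.\ it transports the perturbation of $\sigma_1$ over to $\sigma_2$ rather than transporting $\sigma_1'$ toward $\sigma_2$ by moving $i$. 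That choice keeps matched pairs within one insertion of each other, though (as your own worry correctly anticipates) its injectivity also hinges on a careful treatment of the multiple move-representations of a single neighbor, which is the delicate point any proof of this lemma must address. To repair your argument you would need to replace $\mu_{ij}$ on the complement by a map of the paper's type (or argue abstractly, e.g.\ via Hall's theorem, that a distance-$1$ perfect matching between the two complements exists).
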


\begin{proof}
WLOG, suppose that $\sigma_1$ is $(i,j)$-alike to $\sigma_2$ (with $\sigma_2 \neq \sigma_1$).
Consider the support of $m_{\sigma_1}$. For each $\sigma_1' \in N(\sigma_1)$, we will match $\sigma_1'$ with a distinct permutation $\phi(\sigma_1') \in N(\sigma_2)$. 
First let $\phi(\sigma_1) = \sigma_1$ and $\phi(\sigma_2) = \sigma_2$. For other neighbors $\sigma_1'$ of $\sigma_1$, there are two cases:
\begin{description}
\item Case $1$: $\sigma_1$ is $(i,k)$-alike to $\sigma_1'$ where $k\neq j$. Then it follows that $\sigma_1'$ is also $(i,j)$-alike to $\sigma_2$ and we let $\phi(\sigma_1') = \sigma_1'$.
\item Case $2$: $\sigma_1$ is $(i', j')$-alike to $\sigma_1'$ where $i' \neq i$ and $\sigma_1$ is not $(i,k)$-alike to $\sigma_1'$ for any $k$. In this case, let $\sigma_2'$ be the permutation such that $\sigma_2$ is $(i',j')$-alike to $\sigma_2'$. It follows easily that $\sigma_1'$ is also $(i,j)$-alike to $\sigma_2'$. We then define $\phi(\sigma_1') = \sigma_2'$.
\end{description}
Let us now define a coupling $A$ (not necessarily optimal) between $m_{\sigma_1}$ and $m_{\sigma_2}$. Define $A: V(H) \times V(H) \to \mathbb{R}$ as follows:
\begin{equation}
    A(\sigma_1', \sigma_2') = \begin{cases}
       \frac{1}{(n-1)^2+1} & \textrm{ if } \sigma_1' \in N(\sigma_1) \textrm{ and $\sigma_2' = \phi(\sigma_1')$,}\\
        0    &\textrm{ otherwise. }
    \end{cases}
\end{equation}
It is not hard to verify that $A$ is a coupling of $m_{\sigma_1}$ and $m_{\sigma_2}$. 
Now by definition,
\begin{align*}
    W(m_{\sigma_1},m_{\sigma_2}) &\leq \dss_{\sigma_1',\sigma_2'} A(\sigma_1', \sigma_2') d(\sigma_1', \sigma_2') \\
    &\leq \dss_{\sigma' \in N(\sigma_1)} A(\sigma_1', \phi(\sigma_1')) d(\sigma_1', \phi(\sigma_1'))\\
    &\leq 1 - \frac{n}{(n-1)^2+1}.
\end{align*}
It follows that 
    \[\kappa(\sigma_1, \sigma_2) = 1- W(m_{\sigma_1},m_{\sigma_2}) \geq \frac{n}{(n-1)^2+1} \geq \frac{1}{n}.\]
This completes the proof of the lemma.
\end{proof}

Now we give an example of concentration results on the space of random linear permutations. In particular, we discuss the number of occurrences of certain patterns in random permutations. Denote the set of length $n$ linear permutations by $\mathcal{S}_n$. Given a permutation pattern $\tau \in \mathcal{S}_k$, we say that a permutation $\pi = [\pi_1 \ldots \pi_n] \in \mathcal{S}_n$ contains the pattern $\tau$ if there exists $1\leq i_1 < i_2 < \ldots < i_k \leq n$ such that the $\pi_{i_s} < \pi_{i_t}$ if and only if $\tau_s < \tau_t$ for every pair $s,t$.
Each such subsequence in $\pi$ is called an \textit{occurrence} of the pattern $\tau$. Let $\tau$ be a random permutation in $\mathcal{S}_n$ and let the random variable $X_{\tau, n}:= X_{\tau}(\pi)$ be the number of copies of $\tau$ in $\pi$.
We consider asymptotics as $n\to \infty$ for (one or several) fixed $\tau$.

The (joint) distribution of the $X_{\tau, n}$ has been investigated in a series of paper \cite{Bona, Bona2, JNZ}. In particular, Bona \cite{Bona} showed that for every $\tau \in \mathcal{S}_k$, as $n\to \infty$, 
\begin{equation}\label{eq:normal}
    \frac{X_{\tau, n}- E[X_{\tau, n}]}{n^{k-\frac{1}{2}}} \to N(0, Z_{\tau})
\end{equation}
for some $Z_{\tau} > 0$.
Janson, Nakamura and Zeilberger \cite{JNZ} showed that the above holds jointly for any finite family of patterns $\tau$.

Note that as a consequence of the convergence in \eqref{eq:normal}, we obtain the following concentration inequality:
\begin{equation}
\Prob{\abs*{X_{\tau,n} - E[X_{\tau,n}]} > t \rp \leq 2\exp \lp - \frac{t^2}{2n^{2k-1} Z_{\tau}}}
\end{equation}
which is sharp up to a polynomial factor.

On the other hand, consider the graph $H$ defined at the beginning of this subsection, where $V(H)$ is the set of all linear permutations of $[n]$. 
It is not hard to see that the function $X_{\tau, n}: V(H) \to \mathbb{Z}$ is $\binom{n-1}{k-1}$-Lipschitz. It follows by Theorem \ref{thm:main_tool} that 
\begin{align*}
    \Prob{\abs*{\frac{X_{\tau,n}}{\binom{n-1}{k-1}} -   \frac{E[X_{\tau,n}]}{\binom{n-1}{k-1}}} > \frac{t}{\binom{n-1}{k-1}}} 
        & \leq 2\exp \lp -\frac{t^2 \kappa}{5\binom{n-1}{k-1}^2}  \rp\\
        & \leq 2\exp \lp -\frac{t^2}{C_k n^{2k-1}} \rp
\end{align*}
for some $C_k > 0$. Hence the concentration result in Theorem \ref{thm:main_tool} is in fact asymptotically optimal in the case of counting occurrences of patterns in random permutations.

\begin{remark}
Similar Ricci curvature and concentration results can be obtained for the space of cyclic permutations as well.
\end{remark}

\begin{remark}
Another possible way to geometrize the space of linear permutations is the random transposition model (see, e.g., \cite{EMT}) as follows: let $V(H) = \mathcal{S}_n$ and two permutations $\sigma_1, \sigma_2 \in V(H)$ are adjacent in $H$ if $\sigma_2 = \tau \circ \sigma_1$ for some transposition $\tau$. Define a random walk $m$ on $H$ by 
$$m_{\sigma}(\sigma') = \begin{cases}
     \frac{2}{n(n-1)}  & \textrm{ if $\sigma$ and $\sigma'$ are adjacent in $H$,}\\
    0   &  \textrm{ otherwise}.\end{cases}$$
\end{remark}
The invariant distribution is the uniform measure on $\mathcal{S}_n$. The Ricci curvature of this graph is $\Theta(n^{-2})$, as observed by Gozlan et al \cite{GMPRST}.


\begin{thebibliography}{10}

\bibitem{AW}
R. Adamczak and P. Wolff, Concentration inequalities for non-Lipschitz functions with bounded derivatives of higher order, \textit{Probab. Theory Related Fields} \textbf{162(3)} (2015), 531--586.

\bibitem{AlonSpencer}
N. Alon and J. H. Spencer, \textit{The Probabilistic Method}, Wiley-Interscience, third edition, 2008.

\bibitem{Azuma}
K. Azuma, Weighted sum of certain independent random variables, \textit{Tohoku Math. J.} \textbf{19 (3)} (1967) 357-–367.

\bibitem{Bakry-Emery}
D. Bakry and M. Emery, Diffusions hypercontractives, S\'eminaire de probabilit\'es, XIX, 1983/84, 177-–206, Lecture Notes in Math. 1123, Springer, Berlin, 1985.

\bibitem{Bona}
Mikl\'os B\'ona, The copies of any permutation pattern are asymptotically normal,
arXiv:0712.2792 (2007).

\bibitem{Bona2} 
Mikl\'os B\'ona, On three different notions of monotone subsequences, \textit{Permutation Patterns,} 89-–114, London Math. Soc. Lecture Note Ser., 376, Cambridge Univ. Press, Cambridge, 2010.

\bibitem{Chatterjee}
S. Chatterjee, The missing log in large deviations for triangle counts, \textit{Random Structures
Algorithms}, \textbf{40(4)} (2012), 437--451.

\bibitem{Chung-G-Yau}
 F. Chung, A. Grigor’yan and S.-T. Yau, Upper bounds for eigenvalues of the discrete and continuous Laplace operators, \textit{Adv. Math.} \textbf{117} (1996), 165-–178.

\bibitem{Chung-Lu}
F. Chung and L. Lu, Concentration inequalities and martingale inequalities: a survey,
\textit{Internet Math.}, \textbf{3(1)} (2006) 79-–127.

\bibitem{Chung-Yau94}
F. Chung and S.-T. Yau, A Harnack inequality for homogeneous graphs and subgraphs, \textit{Comm. Anal. Geom.} \textbf{2} (1994), 627–-640, also in \textit{Turkish J. Math.} \textbf{19} (1995), 273-–290.

\bibitem{Chung-Yau95}
F. Chung and S.-T. Yau, Eigenvalues of graphs and Sobolev inequalities, \textit{Combin. Probab. Comput.} \textbf{4} (1995), 11-–25.

\bibitem{Chung-Yau96}
F. Chung and S.-T. Yau, Logarithmic Harnack inequalities, \textit{Math. Res. Lett.} \textbf{3} (1996), 793-–812.

\bibitem{Chung-Yau97A}
F. Chung and S.-T. Yau, A combinatorial trace formula, \textit{Tsing Hua lectures on geometry \& analysis} (Hsinchu, 1990–1991), 107–-116, Int. Press, Cambridge, MA, 1997.

\bibitem{Chung-Yau97B}
F. Chung, A. Grigor’yan and S.-T. Yau, Eigenvalues and diameters for manifolds and graphs, \textit{Tsing Hua lectures on geometry \& analysis} (Hsinchu, 1990–1991), 79--105, Int. Press, Cambridge, MA, 1997.

\bibitem{Chung-Yau99}
F. Chung and S.-T. Yau, Coverings, heat kernels and spanning trees, \textit{Electron. J. Combin.} \textbf{6} (1999), Research Paper 12, 21pp.

\bibitem{Chung-Yau99B}
F. Chung and S.-T. Yau, Spanning trees in subgraphs of lattices, \textit{Contemp. Math.} \textbf{245}, 201–-219, Amer. Math. Soc., Providence, R.I., 1999.

\bibitem{Chung-Yau00}
F. Chung and S.-T. Yau, A Harnack inequality for Dirichlet eigenvalues, \textit{J. Graph Theory} \textbf{34} (2000), 247–-257.

\bibitem{Chung-Yau00B}
F. Chung, A. Grigor’yan and S.-T. Yau, Higher eigenvalues and isoperimetric inequalities on Riemannian manifolds and graphs, \textit{Comm. Anal. Geom.} \textbf{8} (2000), 969–-1026.

\bibitem{Chung-Yau00C}
F. Chung and S.-T. Yau, Discrete Green’s functions, \textit{J. Combin. Theory Ser. A} \textbf{91} (2000), 191-–214.

\bibitem{DK}
B. DeMarco and J. Kahn, Upper tails for triangles, \textit{Random Structures Algorithms},
\textbf{40(4)} (2012), 452--459.


\bibitem{DK2}
B. DeMarco and J. Kahn, Tight upper tail bounds for cliques, \textit{Random Structures Algorithms} \textbf{41(4)} (2012), 469--487.

\bibitem{DGW}
H. Djellout, A. Guillin, L. Wu, Transportation cost-information inequalities and applications to random dynamical systems and diffusions, \textit{Ann. Probab.} \textbf{32 (3B)} (2004) 2702-–2732.

\bibitem{EM}
M. Erbar, J. Maas,  Ricci curvature of finite Markov chains via convexity of the entropy. \textit{Arch. Ratnl
Mech. Anal.}, \textbf{206}, (2012) 997--1038.

\bibitem{EMT}
M. Erbar, J. Maas, and P. Tetali, Discrete Ricci Curvature bounds for Bernoulli-Laplace and Random Transposition models. \textit{Annales de la facult\'e des sciences de Toulouse Math\'ematiques}, \textbf{24(4)} (2016) 781--800.
2015.

\bibitem{GMPRST}
N. Gozlan, J. Melbourne, W. Perkins, C. Roberto, P-M. Samson, and P. Tetali. Working Group in New
directions in mass transport: discrete versus continuous. \textit{AIM SQuaRE report, October}, 2013.

\bibitem{Hoeffding}
W. Hoeffding, Probability inequalities for sums of bounded random variables, \textit{J. Amer. Statist. Assoc.} \textbf{58} (1963)
13–-30.

\bibitem{JNZ}
S. Janson, B. Nakamura, and D. Zeilberger, On the asymptotic
statistics of the number of occurrences of multiple permutation patterns, \textit{Journal of Combinatorics}, \textbf{6} (2015) 117-–143.

\bibitem{JOR}
S. Janson, K. Oleszkiewicz, and A. Ruci\'nski, Upper tails for subgraph
 counts in random graphs, \textit{Israel J. Math.} \textbf{142} (2004), 61--92.

\bibitem{JR}
S. Janson and A. Ruci\'nski, The infamous upper tail, \textit{Random Structures and
Algorithms} \textbf{20} (2002), 317--342. 

\bibitem{Joulin}
A. Joulin, A new Poisson-type deviation inequality for Markov jump processes with positive Wasserstein curvature, \textit{Bernoulli} \textbf{15} (2009), 532–-549.

\bibitem{JO}
A. Joulin and Y. Ollivier, Curvature, concentration and error estimates for Markov chain Monte Carlo, \textit{Ann. Probab.} \textbf{38(6)} (2010), 2418--2442.

\bibitem{LLY}
Y. Lin, L. Lu, S. T. Yau, Ricci curvature of graphs, \textit{Tohoku Math. J.} \textbf{63} (2011) 605--627.

\bibitem{LV}
 J. Lott and C. Villani, Ricci curvature for metric-measure spaces via optimal transport. \textit{Ann. of Math. (2)}, \textbf{169(3)} (2009), 903--991. 

\bibitem{LY}
Y. Lin and S.-T. Yau, Ricci curvature and eigenvalue estimate on locally finite graphs, \textit{Mathematical Research Letters} \textbf{17} (2010), 345-–358.

\bibitem{Mass}
J. Maas, Gradient flows of the entropy for finite Markov chains, \textit{J. Funct. Anal.}, \textbf{261} (2011), 2250--2292.

\bibitem{Mielke}
A. Mielke, Geodesic convexity of the relative entropy in reversible Markov chains, \textit{Calc. Var. Partial
Differential Equations}, \textbf{48} (2013), 1-–31.

\bibitem{Sammer}
M.D. Sammer. Aspects of mass transportation in discrete concentration inequalities. PhD thesis, Georgia Institute of Technology, 2005.


\bibitem{Sturm}
K.-Th. Sturm. On the geometry of metric measure spaces. I and II. \textit{Acta Math.}, \textbf{196(1)} (2006), 65--177.

\bibitem{Ollivier}
Y. Ollivier, Ricci curvature of Markov chains on metric spaces, \textit{J. Funct. Anal.} \textbf{256}
(2009), 810–-864.

\end{thebibliography}
\end{document}